\newtheorem{theorem}{Theorem}
\newtheorem{corollary}[theorem]{Corollary}
\newtheorem{proposition}[theorem]{Proposition}
\newtheorem{definition}[theorem]{Definition}
\newtheorem{question}[theorem]{Question}
\newcommand{\Z}{\mathbb{Z}}
\newcommand{\R}{\mathbb{R}}
\renewcommand{\r}{\mathrm}
\newcommand{\supp}{\r{supp}}
\newcommand{\langl}{\begin{picture}(5.1,10)
\put(1.1,3.3){\rotatebox{60}{\line(1,0){5.5}}}
\put(1.1,3.3){\rotatebox{300}{\line(1,0){5.5}}}
\end{picture}}
\newcommand{\rangl}{\begin{picture}(5,10)
\put(.9,3.3){\rotatebox{120}{\line(1,0){5.5}}}
\put(.9,3.3){\rotatebox{240}{\line(1,0){5.5}}}
\end{picture}}
\begin{document}

\begin{center}
\texttt{
This is the final preprint version of a paper which appeared at \\[.3em]
Communications in Algebra, 49 (2021) 3760-3776. \\[.3em]
The published version is accessible to \\[.3em]
subscribers at https://doi.org/10.1080/00927872.2021.1905825.}
\end{center}
\vspace{2em}

\title%
[Group algebras not made zero by a non-monomial relation]%
{Which group algebras cannot be made zero by imposing\\
a single non-monomial relation?}
\thanks{Readable at
\url{http://math.berkeley.edu/~gbergman/papers/},
archived at \url{https://arXiv.org/abs/1905.12704}\,.
After publication, any updates, errata, related references,
etc., found will be recorded at
\url{http://math.berkeley.edu//~gbergman/papers/}\,.
}

\subjclass[2010]{Primary: 20C07,
Secondary: 03C20, 20E05, 20F16, 20F36.}
\keywords{%
group algebra; $\!2\!$-sided ideal generated by a non-monomial element;
free group.
}

\author{George M. Bergman}
\address{Department of Mathematics\\
University of California\\
Berkeley, CA 94720-3840, USA}
\email{gbergman@math.berkeley.edu}

\begin{abstract}
For which groups $G$ is it true that for all fields $k,$
every non-monomial element of the group algebra $k\,G$
generates a {\em proper} $\!2\!$-sided ideal?
The only groups for which we know this to be true are the
torsion-free abelian groups.
We would, in particular, like to know
whether it is true for all {\em free} groups.

We show that the above property fails for wide classes of groups:
for every group $G$ that contains an element $g\neq 1$ whose image
in $G/[g,G]$ has finite order (in particular, every group
containing a $g\neq 1$ that has finite order, or that
satisfies $g\in [g,G]);$ and for every group
containing an element $g$ which commutes with a distinct conjugate
$h^{-1}gh\neq g$ (in particular, for
every nonabelian solvable group).

Closure properties of the class of
groups satisfying the desired condition are noted.
Further questions are raised.
In particular, a
plausible Freiheitssatz for group algebras of free groups is stated,
which would imply the hoped-for result for such group algebras.
\end{abstract}
\maketitle

\section{Starting point: the question for free groups}\label{S.intro}

Let us give a name to the property we will be considering.

\begin{definition}\label{D.resistant}
A group $G$ will be called {\em resistant} if for every
field $k$ and every element $r=\sum_{g\in G} c_g\,g$ of the group
algebra $k\,G$ whose support, $\supp(r)=\{g\mid c_g\neq 0\},$
has cardinality $>1,$ the $\!2\!$-sided ideal of $k\,G$
generated by $r$ is proper.
\textup{(}Henceforth, ``$2\!$-sided ideal'' will be
shortened to ``ideal''.\textup{)}
\end{definition}

The term `resistant' is sometimes used in the theory
of $\!p\!$-groups with a different meaning \cite{resistant}.

Clearly, every torsion-free abelian group is resistant;
these are the only cases that I know.
The present note arose out of

\begin{question}\label{Q.free}
\cite{overflow}
\cite[Q\,3(i)]{pre-K}
Are all free groups resistant?
\end{question}

This suggests the corresponding
question for groups free in various varieties, such as
free solvable groups of a given derived length,
and free nilpotent groups of given nilpotency class.
If either of those questions had a positive answer (for all derived
lengths, respectively all nilpotency classes), one could
deduce a positive answer to Question~\ref{Q.free}.
But it turns out that both have negative answers, and the easy
arguments proving this show very large classes
of groups to be non-resistant;
we develop these in \S\S\ref{S.binomial}-\ref{S.trinomial} below.
After a few brief digressions in \S\ref{S.variant},
we note in \S\ref{S.univ} some closure properties of the
class of resistant groups;
these imply, inter alia, that if the free group on two
generators is resistant, then so is the free product
of any family of torsion-free abelian groups.
In \S\ref{S.Freiheitssatz} we suggest a plausible
Freiheitssatz for group algebras of
free groups, which, if proved, would imply
a positive answer to Question~\ref{Q.free}.

\section{Which binomial elements generate the improper ideal?}\label{S.binomial}

For $G$ a group and $k$ a field,
elements of $k\,G$ with support of cardinality $1$
(monomials) are invertible, hence are excluded
in the statement of Definition~\ref{D.resistant}.
Given an element with support of cardinality $2$ (a binomial)
$c_1 g_1 + c_2 g_2$ $(c_1, c_2\in k-\{0\},\ g_1\neq g_2\in G)$
we can divide by the unit $c_1 g_2,$ and write
the resulting element as $g-c$ $(g\in G-\{1\},\ c\in k-\{0\}).$

Note that imposing the relation $g-c=0$ on $k\,G$ makes the
image $\overline{g}$ of $g$ central,
so all the conjugates $h^{-1}gh$ of $g\in G$ fall together,
and any relation satisfied in $G$ by these conjugates yields
a relation $\overline{g}^n=1.$
But since $g$ becomes identified with $c,$ if
$g^n$ falls together with $1$ but
$c^n\neq 1$ in $k,$ the resulting factor-algebra collapses.

Theorem~\ref{T.g-c_proper} below characterizes those $g\in G$ such that
this does {\em not} happen for any $c\neq 0$ in any field $k$
(condition~(iv) of that theorem).
Theorem~\ref{T.g-c_improper} gives the details of what happens
for other~$g.$

Writing
\begin{equation}\begin{minipage}[c]{35pc}\label{d.[g,h]}
$[g,\,h] \ = \ g^{-1}\,h^{-1}\,g\,h,$
\end{minipage}\end{equation}
we shall denote by $[g,G]$ the subgroup of $G$
generated by the elements $[g,\,h]$ $(h\in G).$
This is a normal subgroup of $G,$ since
${h'}^{-1}\,[g,h]\,h' = [g,\,h']^{-1}\,[g,\,hh'].$

\begin{theorem}\label{T.g-c_proper}
For an element $g\neq 1$ of a group $G,$ the following conditions
are equivalent.\\[.3em]
\textup{(i)}~~The image $\overline{g}$ of $g$ in $G/[g,G]$
has infinite order.\\[.3em]
\textup{(ii)}~For every equation
$\prod_{i=1}^N (h_i^{-1} g h_i)^{\varepsilon_i}=1$ holding
in $G,$ where $h_i\in G$ and $\varepsilon_i=\pm 1$ for
$i=1,\dots,N,$ one has $\sum_{i=1}^N \varepsilon_i = 0.$\\[.3em]
\textup{(iii)}\,For every equation
$\prod_{i=1}^M h_i^{-1} g h_i= \prod_{j=1}^{M'} {h'}_j^{-1} g h'_j$
holding in $G$ with $h_i,\,h'_j\in G,$ one has $M=M'.$\\[.3em]
\textup{(iv)}~For every field $k$ and element $c\in k-\{0\},$
the element $g-c$ generates a proper ideal
of~$k\,G.$\\[.3em]
\textup{(v)}~~Either for some element $c$ of infinite multiplicative
order in some field $k,$ or for a family of elements $c$ of infinitely
many distinct finite
multiplicative orders in \textup{(}possibly various\textup{)}
fields $k,$ the element\textup{(}s\textup{)} $g-c$ generate proper
ideals in the algebras $k\,G.$
\end{theorem}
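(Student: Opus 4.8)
The plan is to establish the chain (i) $\Leftrightarrow$ (ii) $\Leftrightarrow$ (iii) by elementary group theory, and then to fold in the ring-theoretic conditions through the cycle (i) $\Rightarrow$ (iv) $\Rightarrow$ (v) $\Rightarrow$ (i). The organizing observation is that under $\phi\colon G\to G/[g,G]$ every conjugate $h^{-1}gh$ has the same image $\overline{g}$ (since $h^{-1}gh=g\,[g,h]$ with $[g,h]\in[g,G]$), and that $\overline{g}$ is central there; thus $\phi$ collapses the whole conjugacy class $C$ of $g$ to a single central element. I will write $H=\langle C\rangle$, a normal subgroup of $G$.

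For (i) $\Rightarrow$ (ii) I would apply $\phi$ to a relation $\prod_{i=1}^N (h_i^{-1}gh_i)^{\varepsilon_i}=1$, obtaining $\overline{g}^{\,\sum\varepsilon_i}=1$; infinite order of $\overline{g}$ forces $\sum\varepsilon_i=0$. For the converse I would use that $\overline{g}$ of finite order $n$ means $g^n\in[g,G]$, write $g^n$ as a product of commutators $[g,h_i]^{\pm1}$, expand each as a product of one conjugate of $g^{-1}$ and one of $g$, and transpose to a relation among conjugates of exponent sum $n\neq0$, contradicting (ii). The equivalence (ii) $\Leftrightarrow$ (iii) is then combinatorial: (ii) $\Rightarrow$ (iii) by carrying one side of $\prod_{i=1}^M h_i^{-1}gh_i=\prod_{j=1}^{M'}{h'}_j^{-1}gh'_j$ across to form a relation of exponent sum $M-M'$; and (iii) $\Rightarrow$ (ii) by the ``bubbling'' identity $c\,d^{-1}=d^{-1}(d\,c\,d^{-1})$, valid for $c,d\in C$ with $d\,c\,d^{-1}\in C$, which lets me push every inverted factor of a relation $\prod c_i^{\varepsilon_i}=1$ leftward past the positive factors (conjugating the latter but keeping them in $C$), until the relation reads $(q\text{ inverses})(p\text{ positives})=1$; rewriting this as an equality of two positive products, of lengths $p$ and $q$, and invoking (iii) gives $p=q$.

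The ring-theoretic heart, and the step I expect to be the main obstacle, is (i) $\Rightarrow$ (iv): producing, for every field $k$ and every $c\in k-\{0\}$, a nonzero algebra in which $g$ maps to $c$. Here I would pass to $\overline{G}=G/[g,G]$, in which $Z=\langle\overline{g}\rangle$ is a central subgroup isomorphic to $\Z$ by (i). Then $k\,\overline{G}$ is free as a module over the Laurent algebra $k\,Z\cong k[\overline{g}^{\,\pm1}]$ on a transversal of $Z$, and because $\overline{g}$ has infinite order the assignment $\overline{g}\mapsto c$ defines a $k$-algebra map $k\,Z\to k$ for every $c\neq0$. Tensoring, $A=k\,\overline{G}\otimes_{k\,Z}k\cong k\,\overline{G}/(\overline{g}-c)$ is free, hence nonzero, as a $k$-module, and the composite $k\,G\to k\,\overline{G}\to A$ kills $g-c$; so $g-c$ generates a proper ideal. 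The delicacy is precisely that this needs $\overline{g}$ of infinite order — finite order $n$ would impose $c^n=1$ on $k\,Z\to k$ — so condition (i) is used in an essential way.

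Finally (iv) $\Rightarrow$ (v) is immediate (take $k=\mathbb{Q}$ and $c=2$, of infinite multiplicative order, to realize the first alternative of (v)). For (v) $\Rightarrow$ (i) I would argue the contrapositive: if $\overline{g}$ has finite order $n$, then in any $k\,G/(g-c)$ each commutator $[g,h]=g^{-1}(h^{-1}gh)$ maps to $c^{-1}c=1$, so the relation $g^n=\prod[g,h_i]^{\pm1}$ forces $c^n=1$ in the quotient; hence whenever $c^n\neq1$ the quotient collapses and $g-c$ generates the improper ideal. Thus $g-c$ can generate a proper ideal only for $c$ with $c^n=1$, i.e. for finitely many finite multiplicative orders (divisors of $n$) and for no element of infinite order — exactly the negation of (v). This closes the cycle and yields the equivalence of all five conditions.
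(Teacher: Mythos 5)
Your proposal is correct and follows essentially the same route as the paper: the same cycle (i)$\iff$(ii)$\iff$(iii) with the conjugation--rearrangement trick (yours pushes inverted factors leftward rather than rightward, a cosmetic difference), the same tensoring of $k(G/[g,G])$ over the central Laurent subalgebra $k[\overline{g}^{\pm 1}]$ for (i)$\implies$(iv), and the same collapse argument (conjugates of $g$ becoming the scalar $c$, forcing $c^n=1$) for (v)$\implies$(i). No gaps; your write-up of (v)$\implies$(i) merely spells out details the paper leaves as a sketch.
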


\begin{proof}
We shall show (i)$\iff$(ii)$\iff$(iii), and then
(i)$\implies$(iv)$\implies$(v)$\implies$(i).

(i)$\implies$(ii) is immediate, since a relation
$\prod_{i=1}^N (h_i^{-1} g h_i)^{\varepsilon_i}=1$ as in~(ii)
gives $\overline{g}^{\,\sum_{i=1}^N\varepsilon_i}=1$ in $G/[g,G].$
To see the converse, note that a relation $\overline{g}^{\,n}=1$ in
$G/[g,G]$ corresponds to a relation $g^n=\prod_{i=1}^N [g,h_i]^{\pm 1}$
in $G,$ and since each $[g,h_i]$ has the form
$g^{-1}(h_i^{-1}gh_i)^{+1},$
this yields a relation as in~(ii) with the sum of the
exponents equal to $n.$
Hence if~(ii) holds, $\overline{g}^{\,n}=1$ implies $n=0,$ proving~(i).

(ii)$\implies$(iii) is clear, since from any relation as in~(iii),
we get, on right-dividing by the right-hand side,
a relation as in~(ii) with $\sum \varepsilon_i=M-M'.$
To get the converse, note that in any relation as in~(ii), if we
have successive terms
$(h_i^{-1} g h_i)^{-1} (h_{i+1}^{-1} g h_{i+1})^{+1},$ we can
rearrange these as
$(h_{i+1}^{-1} g h_{i+1})^{+1} ({h'_i}^{-1} g h'_i)^{-1},$ where
$h'_i = h_i (h_{i+1}^{-1} g h_{i+1}).$
In this way, we can recursively turn any product as in~(ii)
into a product of the same form, and
with the same $\sum \varepsilon_i,$ in which all the
terms with exponent $-1$ occur to the right of all
terms with exponent $+1.$
That relation is clearly equivalent to a relation
as in~(iii) with $M-M'=\sum \varepsilon_i.$

To get (i)$\implies$(iv), note that assuming~(i),
$k(G/[g,G])$ will be free as a module over its central
Laurent polynomial subalgebra $k\langl\,\overline{g}\,\rangl;$
hence tensoring over $k\langl\,\overline{g}\,\rangl$
with $k\langl\,\overline{g}\,\rangl/(\overline{g}-c)\cong k,$
we get a nonzero homomorphic image of $k\,G$ in
which $g-c$ goes to zero, establishing~(iv).
Clearly, (iv)$\implies$(v).
The proof of (v)$\implies$(i) uses the
idea sketched in the second paragraph of this section:
if we had $\overline{g}^{\,n}=1$ for some $n>1,$ then applying a
case of~(v) in which $c^n\neq 1,$ we would get a contradiction.
\end{proof}

(A special case of the argument for (i)$\implies$(iv), where $G$ is a
free nilpotent group, so that~(i) automatically holds, was pointed
out to me a few years ago by Dave Witte Morris
in a discussion of Question~\ref{Q.free}.)

Turning to elements $g$ not satisfying the above conditions, we have

\begin{theorem}\label{T.g-c_improper}
Let $g\neq 1$ be an element of a group $G$ which does not
satisfy the equivalent conditions of Theorem~\ref{T.g-c_proper}.
Then there exists an integer $n\geq 1$ characterized by the following
equivalent conditions:\\[.3em]
\textup{(i)}~~The image $\overline{g}$ of $g$ in $G/[g,G]$
has order exactly~$n.$\\[.3em]
\textup{(ii)}~On the set of all relations of the form
$\prod_{i=1}^N (h_i^{-1} g h_i)^{\varepsilon_i}=1$ holding in $G,$
the values assumed by $\sum_{i=1}^N \varepsilon_i$
are precisely the multiples of $n.$\\[.3em]
\textup{(iii)}~On the set of all relations of the form
$\prod_{i=1}^M h_i^{-1} g h_i= \prod_{j=1}^{M'} {h'_j}^{-1} g h'_j$
holding in $G,$ the values assumed by $M-M'$
are precisely the multiples of $n.$\\[.3em]
\textup{(iv)}~For $k$ a field and $c$ an element of $k-\{0\},$
the element $g-c$ of~$k\,G$ generates the improper ideal
if and only if $c^n\neq 1.$
\end{theorem}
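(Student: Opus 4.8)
My plan is to let $n$ denote the order of $\overline{g}$ in $G/[g,G]$; this is a well-defined integer $\geq 1$ precisely because $g$ fails condition~(i) of Theorem~\ref{T.g-c_proper}, so that taking~(i) as the definition of $n$ reduces the theorem to establishing the chain (i)$\iff$(ii)$\iff$(iii) and (i)$\iff$(iv). Throughout I would follow the template of the proof of Theorem~\ref{T.g-c_proper}, now tracking divisibility by $n$ in place of vanishing.

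For (i)$\iff$(ii) I would reuse the correspondence from that proof: a relation $\prod_{i=1}^N (h_i^{-1} g h_i)^{\varepsilon_i}=1$ forces $\overline{g}^{\,\sum\varepsilon_i}=1$ in $G/[g,G]$, while conversely each relation $\overline{g}^{\,m}=1$ says $g^m\in[g,G]$, hence $g^m=\prod[g,h_i]^{\pm1}$, which rewrites as a relation of the stated form with $\sum\varepsilon_i=m$. Thus the set of attainable values of $\sum\varepsilon_i$ is exactly $\{m:\overline{g}^{\,m}=1\}$, and this is the set of multiples of $n$ if and only if $\overline{g}$ has order $n$. The equivalence (ii)$\iff$(iii) is then the verbatim rearrangement argument already given, which shows the attainable values of $M-M'$ to coincide with those of $\sum\varepsilon_i$.

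For~(iv) the easy direction is that $c^n\neq1$ forces the improper ideal: since $n$ is an attainable value of $\sum\varepsilon_i$, there is a relation $\prod(h_i^{-1}gh_i)^{\varepsilon_i}=1$ with $\sum\varepsilon_i=n$, and imposing $g=c$ sends every conjugate $h_i^{-1}gh_i$ to the central scalar $c$, so the relation becomes $c^n=1$ in $kG/(g-c)$; as $c^n\neq1$ is a unit of $k$, that algebra collapses. The substantive direction is that $c^n=1$ forces a proper ideal, which I expect to be the main obstacle; it is the analogue of (i)$\implies$(iv) in the previous theorem. Here I would pass to $H=G/[g,G]$, in which $Z=\langle\overline{g}\rangle\cong\mathbb{Z}/n\mathbb{Z}$ is central, so that $k\langle\overline{g}\rangle\cong k[t]/(t^n-1)$ is a central subalgebra of $kH$ over which $kH$ is free, a transversal of $Z$ serving as basis. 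Since $c^n=1$, the scalar $c$ is a root of $t^n-1$, whence $k[t]/(t^n-1,\,t-c)\cong k$; tensoring the free module $kH$ down along $\overline{g}\mapsto c$ therefore yields a nonzero algebra $kH/(\overline{g}-c)\cong k^{(I)}$ into which $kG$ maps with $g\mapsto c$, exhibiting $g-c$ as a generator of a proper ideal. Combining the two directions shows that the single integer $n=\mathrm{ord}(\overline{g})$ is characterized by~(iv) as well, closing the loop.
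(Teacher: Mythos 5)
Your proposal is correct and takes essentially the same route as the paper's (sketched) proof: take $n$ as in~(i), rerun the arguments of Theorem~\ref{T.g-c_proper} to identify the value sets in~(ii) and~(iii) with the multiples of $n,$ and adapt the (i)$\implies$(iv) freeness-and-tensoring argument with $k[t]/(t^n-1)$ replacing the Laurent polynomial ring. The one point you pass over too quickly is that~(iv) determines $n$ uniquely (needed for $n$ to be \emph{characterized} by~(iv)); as the paper notes, this holds because every positive integer is the multiplicative order of an element of some field, so the biconditional in~(iv) cannot hold for two distinct values of $n.$
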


\begin{proof}[Sketch of proof]
Take $n$ as in~(i).
It is not hard to show that the sets of integers described
in~(ii) and in~(iii) are additive subgroups of $\Z,$
since relations can be ``multiplied'' and ``inverted'';
call their positive generators $n'$ and $n''$ respectively.
Arguments essentially analogous to those in the
proof of Theorem~\ref{T.g-c_proper} show that $n=n'=n''.$
The proof that the $n$ of~(i) satisfies~(iv) is likewise analogous
to the proof of (i)$\iff$(iv) in Theorem~\ref{T.g-c_proper},
and~(iv) uniquely determines $n,$ because for all positive integers $m$
there exist fields with elements of multiplicative order~$m.$
\end{proof}

The next corollary gives some applications of this result.
Case~(b) below, for $\alpha>0,$ demolished my hope that all
$\!2\!$-sided orderable groups might be resistant,
since the group of order-preserving affine endomaps of the real
line is so orderable, by the ordering that makes $g_1\geq g_2$ whenever
$g_1(t)\geq g_2(t)$ in a neighborhood of $+\infty.$
Cases~(c) and~(d) both show that a free product
of free groups with amalgamation of a common
subgroup need not be resistant: the former implies this for
the groups $\langl h, h'\mid h^n = {h'}^n\rangl$ $(n>1),$
the latter, proved with the help of the former, gives the
more general case $\langl h, h'\mid h^n = {h'}^{n'}\rangl$ $(n,\,n'>1).$

Note that where in Theorem~\ref{T.g-c_improper}, $n$ denoted
the least positive integer with various properties, in this
corollary it denotes any such positive integer, i.e., any
positive multiple of the $n$ of that theorem.

\begin{corollary}\label{C.g-c_egs}
For each of the following classes of groups $G,$
elements $g\in G-\{1\},$ and positive integers $n,$ every
element $c\neq 0$ of a field $k$ satisfying
$c^n\neq 1$ has the property that $g-c\in k\,G$
generates the improper ideal.\\[.3em]
\textup{(a)}~~$G$ any group having a nonidentity element~$g$
of finite order, and any $n>1$ such that $g^n=1.$\\[.3em]
\textup{(b)}~~$G$ the group of affine maps of the real
line generated by the elements $g$ and $h,$
where $g(t)=t+1$ and $h(t)=\alpha\,t$ $(t\in \R)$ for
a rational number $\alpha= m/m'\neq 1,0,$ with $n=|m-m'|.$\\[.3em]
\textup{(c)}~~$G$ any group containing elements $h\neq h'$
such that $h^n={h'}^n$ for some $n>1,$ with $g=h^{-1}h'.$\\[.3em]
\textup{(d)}~~$G$ any group containing elements $h_1,$ $h_2$
which do not commute, but such that for some $n>1,$
$h_1$ commutes with $h_2^n,$ with $g=[h_2,h_1].$\vspace{.3em}

Hence no group with any of the indicated properties is resistant.
\end{corollary}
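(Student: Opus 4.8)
The plan is to verify each of the four cases (a)--(d) by exhibiting, for the indicated $g$ and $n,$ a relation among conjugates of $g$ that witnesses $\overline{g}^{\,n}=1$ in $G/[g,G];$ by condition~(i) of Theorem~\ref{T.g-c_improper}, the order of $\overline{g}$ then divides~$n,$ so by condition~(iv) of that theorem, any $c$ with $c^n\neq 1$ makes $g-c$ generate the improper ideal. (Recall the remark preceding the corollary: here $n$ need only be \emph{some} positive integer with the stated property, hence a multiple of the minimal one, which is exactly what ``divides'' gives us.) The final sentence, that no such $G$ is resistant, is then immediate: producing even a single $(k,c)$ for which $g-c$ generates the improper ideal contradicts Definition~\ref{D.resistant}, since $g-c$ is a binomial with support of cardinality~$2.$

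First I would dispose of case~(a): if $g^n=1$ with $g\neq 1,$ then trivially $\overline{g}^{\,n}=1,$ so condition~(i) applies with this~$n.$ For case~(c), the relation $h^n=h'^n$ rewrites, using $g=h^{-1}h',$ as an equality $h^n = (hg)^n;$ expanding the right side as a product of $n$ conjugates of $g$ interleaved with powers of $h$ (each factor of the product $hg\,hg\cdots hg$ can be regrouped as $h^n$ times a product of $n$ conjugates of $g$ by powers of $h$) yields, after cancelling $h^n,$ a relation of the form $\prod_{i=1}^n h_i^{-1} g h_i = 1;$ this is a relation of type~(ii) with $\sum\varepsilon_i = n,$ so $n$ lies in the subgroup of~$\Z$ described there, giving $\overline{g}^{\,n}=1.$

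For case~(b), the element $g$ is the translation $t\mapsto t+1$ and conjugation by $h$ scales: $h^{-1}gh$ sends $t\mapsto t+\alpha^{\pm1}$ depending on the direction, and more generally conjugating $g$ by powers of $h$ produces translations by powers of $\alpha.$ Writing $\alpha=m/m',$ I would show that a suitable product of $|m-m'|$ signed conjugates of $g$ (translations by various powers of $\alpha,$ which are themselves integers or rationals with controlled denominators) telescopes to the identity translation, yielding a type~(ii) relation with $\sum\varepsilon_i=\pm(m-m')=\pm n.$ The bookkeeping here is the one genuinely computational point, but it is routine once one tracks the net translation amount. Case~(d) is handled by reduction to~(c), as the corollary's preamble indicates: from $h_1$ commuting with $h_2^n$ one manufactures two distinct elements with equal $n$th powers (roughly, $h_2$ and its conjugate $h_1^{-1}h_2 h_1$ agree after raising to the $n$th power because $h_1$ centralizes $h_2^n$), with $g=[h_2,h_1]=h_2^{-1}h_1^{-1}h_2 h_1$ playing the role of $h^{-1}h'$ from case~(c).

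The main obstacle I anticipate is the explicit telescoping computation in case~(b): one must choose the conjugating elements $h_i$ and signs $\varepsilon_i$ so that the net translation is zero while the exponent sum is exactly $\pm(m-m'),$ and verifying that $|m-m'|$ (rather than some proper divisor) is achievable---i.e.\ that no smaller exponent sum also works, which is what pins down the order of $\overline{g}$---requires understanding the structure of $[g,G]$ in this affine group. However, since the corollary only asserts that $g-c$ generates the improper ideal whenever $c^n\neq 1$ (equivalently, that the true order of $\overline{g}$ divides~$n$), I only need the \emph{existence} of a relation with $\sum\varepsilon_i=n,$ not minimality; this considerably lightens the burden, and the remaining cases~(a), (c), (d) are then essentially formal.
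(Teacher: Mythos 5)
Your proposal is correct and takes essentially the paper's route: each case is reduced to one of conditions (i)--(iii) of Theorem~\ref{T.g-c_improper} (using, as you note, that $n$ need only be a multiple of the minimal one, so existence of a relation suffices and minimality is irrelevant), condition~(iv) then gives the conclusion, and your reduction of case~(d) to case~(c) via $h=h_2,$ $h'=h_1^{-1}h_2\,h_1$ is exactly the paper's. The only differences are presentational: in~(c) you expand $(hg)^n$ as $h^n$ times a product of $n$ conjugates of $g$ where the paper argues in $G/[g,G]$ that $\overline{h}$ commutes with the central element $\overline{g}$ and hence with $\overline{h'};$ and the telescoping computation you defer in~(b) is completed in one line by the paper's identity $(h^{-1}gh)^m=g^{m'}$ (both sides translate by $m'),$ a relation of type~(iii) with $M-M'=m-m'.$
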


\begin{proof}
We shall denote by ``(i)-(iv)'' the conditions so named
in Theorem~\ref{T.g-c_improper}.
In each case of this corollary, we shall see that one of (i)-(iii) is
satisfied, thus establishing~(iv), which gives the desired conclusion.

In case~(a) it is clear that~(i) is satisfied,
with the $n$ of~(i) some divisor of the given $n.$

In case~(b), note that $h^{-1}\,g\,h$ carries $t$ to $t+(m'/m),$
hence $(h^{-1}\,g\,h)^m = g^{m'},$ so the $n$ of~(iii) will
be a divisor of $|m'-m|.$

In case~(c), for $g$ as defined there, note that in $G/[g,G],$
since $\overline{h}$ must commute with the central element
$\overline{g}= \overline{h}^{\,-1}\,\overline{h'},$
it commutes with $\overline{h'}.$
Hence $\overline{g}^{\,n}=
(\overline{h}^{\,-1}\ \overline{h'})^n=
\overline{h}^{\,-n}\ \overline{h'}^{\,n} = 1,$ so we can apply~(i).

In case~(d), we can apply~(c) above with $h=h_2,$
$h'=h_1^{-1} h_2\, h_1,$ noting that indeed $h\neq h',$ but that
$h^n = h_2^n = h_1^{-1} h_2^n\, h_1$ (by hypothesis), which
equals ${h'}^n.$
\end{proof}

Remarks: \ The final inequality of Theorem~\ref{T.g-c_improper}(iv)
can never hold if $c=1;$ hence that case of
the result says that $g-1$ can never
generate the improper ideal -- which is certainly true,
since $g-1$ lies in the augmentation ideal of $k\,G.$

The case $g=1$ is excluded in the statements of the above
results for a different reason:
in that case, $g-c,$ if not zero, is a monomial
element of $k\,G,$ which we are not
interested in, though the conclusion that $g-c$ generates the improper
ideal unless $c=1$ is true.

Variants of Corollary~\ref{C.g-c_egs}(b)
give further interesting examples.
For instance, though replacing the $\alpha$ of that example
with a transcendental real number will not lead to
a group with the indicated properties, if we take {\em any} real number
$\alpha,$ and let $G$ be generated by the $g$ of that example
together with both $h(t) = \alpha\,t$ and $h'(t) = (\alpha+1)\,t,$
we find that $g (h\,g\,h^{-1}) = h'\,g\,{h'}^{-1},$
so case~(iii) of Theorem~\ref{T.g-c_improper} is again applicable.
If we take $\alpha$ to be the positive root of
the equation $\alpha^2 = 1+\alpha$ (``the golden section''),
then with $G$ again generated just by $g$ and $h,$ we get
$h^2g\,h^{-2} = g\,(h\,g\,h^{-1}),$ with the same consequence.

\section{Some trinomials}\label{S.trinomial}

We do not have a general result on when a trinomial element
of a group algebra generates the improper ideal; but the particular
result given by the next theorem, though technical in its statement,
is easy to prove, and quickly allows us to show that no
nonabelian solvable group is resistant.
The theorem does not involve a choice of
$c\in k,$ so our group algebras could be
taken over any commutative ring, but to keep our context
consistent we will
assume them taken over a specified field $k.$

\begin{theorem}\label{T.trinomial}
Suppose a group $G$ has elements $g,\,h$ such that the normal
subgroup $N\subseteq G$ generated by the commutator $[g,h]$
contains an element $f\neq 1$ which commutes with $g$ in $G.$
Then in the group algebra $k\,G,$ the trinomial $1+h-f$
generates the improper ideal.
\end{theorem}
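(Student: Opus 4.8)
The plan is to work in the factor algebra $A = k\,G/I$, where $I$ is the ideal generated by $r = 1+h-f$, and to show directly that $A = 0$ (equivalently, that $I = k\,G$); this is exactly the collapsing strategy used for binomials in \S\ref{S.binomial}. Throughout I write $\overline{x}$ for the image in $A$ of an element $x\in k\,G$, and I note that each $\overline{x}$ with $x\in G$ is a unit of $A$, since $x$ is already a unit of $k\,G$. The only hypotheses I expect to exploit beyond $f\in N$ are the commuting relation $f\,g = g\,f$ and (at the very end) $f\neq 1$, and the guiding idea is that conjugating the relator $r$ by $g$ disturbs only its middle term.

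First I would conjugate. Because $f$ commutes with $g$, one has $g^{-1}\,r\,g = 1 + g^{-1}\,h\,g - f$, whose constant and $f$ terms coincide with those of $r$. Both $r$ and $g^{-1}\,r\,g$ lie in $I$, so in $A$ their difference $h - g^{-1}\,h\,g$ vanishes; this yields $\overline{h} = \overline{g}^{\,-1}\,\overline{h}\,\overline{g}$, i.e.\ $\overline{g}$ and $\overline{h}$ commute, so $\overline{[g,h]} = 1$. (Concretely, $h - g^{-1}\,h\,g = h\,(1 - [g,h]^{-1})$, a unit times $1 - [g,h]^{-1}$, which makes transparent that it is precisely $[g,h]$ that is being killed.)

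Next I would propagate this from $[g,h]$ to all of $N$. Since $N$ is the normal subgroup generated by $[g,h]$, every element of $N$ is a product of conjugates $u^{-1}\,[g,h]^{\pm 1}\,u$ with $u\in G$; applying the homomorphism $k\,G\to A$ and using that each $\overline{u}$ is a unit together with $\overline{[g,h]}=1$, every such factor maps to $1$, hence $\overline{w}=1$ for all $w\in N$. In particular $\overline{f}=1$. Feeding this back into $\overline{r}=0$ collapses the relator to $0 = 1 + \overline{h} - 1 = \overline{h}$; but $\overline{h}$ is a unit, so $1 = \overline{h}^{\,-1}\,\overline{h} = 0$ in $A$, forcing $A = 0$ and proving that $1+h-f$ generates the improper ideal.

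I do not anticipate a serious obstacle once the conjugation move is found: the genuine content is recognizing that the hypothesis $f\,g = g\,f$ is exactly what makes $r - g^{-1}\,r\,g$ reduce to $h - g^{-1}\,h\,g$, thereby killing $[g,h]$ and, through normality, all of $N$ including $f$. The one point I would take care to verify is that the entire normal closure — not merely $[g,h]$ itself — dies in $A$; but this is immediate, since the images of group elements are units and conjugation in $A$ therefore fixes the value $1$. (The hypothesis $f\neq 1$ is needed only to guarantee that $r$ is genuinely non-monomial, so that the statement falls under Definition~\ref{D.resistant}; it plays no role in the collapse itself.)
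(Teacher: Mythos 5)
Your collapse argument is correct and is, in essence, the paper's own proof. The paper obtains the commutation of $\overline{h}$ with $\overline{g}$ by noting that modulo the ideal $h$ falls together with $f-1$, which commutes with $g$; you obtain it by conjugating the relator by $g$ and subtracting. Either way, the chain $\overline{[g,h]}=1$, hence all of $N$ maps to $1$, hence $\overline{f}=1$, hence $\overline{h}=0$, contradicting the invertibility of $\overline{h}$, is exactly the paper's argument, and every step you wrote for it is valid.

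The one genuine gap is in your final parenthetical. The theorem asserts that $1+h-f$ \emph{is} a trinomial, i.e., that $1,$ $h,$ $f$ are three distinct group elements, and this is not a formality: the purpose of the theorem (see Corollary~\ref{C.trinomial} and Corollary~\ref{C.solvable}) is to exhibit a \emph{non-monomial} element generating the improper ideal, so without distinctness the result loses its content. Your claim that ``$f\neq 1$ is needed only to guarantee that $r$ is genuinely non-monomial'' is false as stated: $f\neq 1$ alone does not guarantee this. If $h=f$ then $r=1$ is a monomial; if $h=1$ then $r=2-f,$ which in characteristic $2$ is the monomial $-f$ (and is in any case a binomial, not a trinomial). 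What rules out these degenerate cases is not $f\neq 1$ by itself but the same trick that drives the collapse: if $h$ equaled $1$ or $f,$ then $h$ would commute with $g,$ so $[g,h]=1,$ so $N$ would be trivial, and then $f\in N$ would force $f=1,$ contradicting the hypothesis. This verification is precisely the first paragraph of the paper's proof, and your write-up needs it.
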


\begin{proof}
Let us check first that $1+h-f$
is indeed a trinomial; i.e., that $1,$ $h,$ $f$ are distinct.
By hypothesis, $f\neq 1.$
If $h$ were equal to either $1$ or $f,$ it would
commute with $g,$ so the normal subgroup $N$ generated by $[g,h]$
would be trivial, so
since $f\in N$ we would have $f=1,$ again contradicting our hypothesis.

The proof that the ideal generated by $1+h-f$ is improper uses
the same trick:
Modulo that ideal, $h$ falls together with $f-1,$ hence commutes
with $g,$ hence $[g,h]$ becomes $1,$ hence all elements
of $N$ fall together with $1.$
Hence $f-1$ becomes zero, so $h$ becomes zero.
Since $h$ is invertible, our algebra collapses.
\end{proof}

%

In particular,

\begin{corollary}\label{C.trinomial}
If in a group $G$ an element $g$ commutes with a conjugate
$h^{-1}gh\neq g,$ then in
the group algebra $k\,G,$ the trinomial $1+h-[g,h]$
generates the improper ideal.
\end{corollary}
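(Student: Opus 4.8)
The plan is to obtain this as an immediate specialization of Theorem~\ref{T.trinomial}, applied to the \emph{same} pair $g,h$ and with the choice $f=[g,h]$. With this choice the asserted trinomial $1+h-[g,h]$ is exactly the trinomial $1+h-f$ of the theorem, so it will suffice to verify that $g$, $h$, and $f=[g,h]$ satisfy the three hypotheses of that theorem: that $f$ lies in the normal subgroup $N$ generated by $[g,h]$, that $f\neq 1$, and that $f$ commutes with $g$.

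The first of these is free: since $N$ is by definition the normal subgroup generated by $[g,h]$, the element $f=[g,h]$ certainly lies in $N$. The second is a restatement of the hypothesis, since $[g,h]=g^{-1}h^{-1}gh=1$ would force $h^{-1}gh=g$, contrary to the assumption that the conjugate $h^{-1}gh$ is distinct from $g$.

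The only point carrying any content is the third, that $[g,h]$ commutes with $g$, and here the hypothesis that $g$ commutes with its conjugate $h^{-1}gh$ is exactly what is needed. Writing $[g,h]=g^{-1}\,(h^{-1}gh)$, one finds $g\,[g,h]=h^{-1}gh$ while $[g,h]\,g=g^{-1}\,(h^{-1}gh)\,g$; these two expressions coincide precisely when $g$ commutes with $h^{-1}gh$, which is our hypothesis. Thus all three hypotheses of Theorem~\ref{T.trinomial} hold, and that theorem yields the desired conclusion. I expect no real obstacle beyond correctly identifying $f=[g,h]$ as the right element of $N$ to feed into the theorem: the verification is a one-line group-theoretic computation, and no choice of $c\in k$ or further analysis is needed.
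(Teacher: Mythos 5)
Your proposal is correct and is essentially the paper's own proof: both take $f=[g,h]$, observe that $f\neq 1$ and that $g$ commutes with $f=g^{-1}(h^{-1}gh)$ because it commutes with $h^{-1}gh$, and then invoke Theorem~\ref{T.trinomial}. No discrepancies to report.
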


\begin{proof}
If $g$ commutes with $h^{-1}gh\neq g,$ then it also commutes
with $g^{-1}(h^{-1}gh)\neq 1,$ i.e.,
with $[g,h],$ and we can apply the preceding theorem.
\end{proof}

\begin{corollary}\label{C.solvable}
Any nonabelian solvable group $G$ has
an element $g$ which commutes with a conjugate
$h^{-1}gh\neq g;$ hence its group algebra $k\,G$
contains a trinomial element which generates the improper ideal.
\end{corollary}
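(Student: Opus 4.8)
The plan is to reduce the statement to its purely group-theoretic content—that a nonabelian solvable $G$ contains an element $g$ commuting with a distinct conjugate $h^{-1}gh\neq g$—and then quote Corollary~\ref{C.trinomial} to produce the trinomial generating the improper ideal. To find such $g$ and $h$, I would exploit solvability through the derived series $G=G^{(0)}\supseteq G^{(1)}\supseteq\cdots\supseteq G^{(\ell)}=\{1\}$, which terminates because $G$ is solvable, and which has length $\ell\geq 2$ because $G$ is nonabelian (so $G^{(1)}=[G,G]\neq\{1\}$). Let $A=G^{(\ell-1)}$ be its last nontrivial term. Then $A$ is normal in $G$, and it is abelian since $[A,A]=G^{(\ell)}=\{1\}$. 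The argument then splits according to whether or not $A$ lies in the center $Z(G)$.

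If $A\not\subseteq Z(G)$, I would simply choose a noncentral $g\in A$ and any $h$ with $h^{-1}gh\neq g$. Because $A$ is normal, the conjugate $h^{-1}gh$ again lies in $A$; because $A$ is abelian, $g$ commutes with it. Thus $g$ commutes with the distinct conjugate $h^{-1}gh$, as required.

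The case $A\subseteq Z(G)$ is the one to watch, and is where the only real subtlety lies: here every element of $A$ is central, so the previous device fails outright, and examples such as the quaternion group—whose sole nontrivial derived subgroup is its center—show that this case genuinely occurs. To handle it I would step one level up and set $B=G^{(\ell-2)}$ (with $B=G$ when $\ell=2$), so that $[B,B]=A$. Since $A\neq\{1\}$, there exist $x,y\in B$ with $[x,y]\neq 1$, and this commutator lies in $A\subseteq Z(G)$, hence is central in $G$. Taking $g=x$ and $h=y$, the convention~\eqref{d.[g,h]} gives $h^{-1}gh=y^{-1}xy=x\,[x,y]$, which differs from $x$ because $[x,y]\neq 1$; and $g=x$ commutes with $x\,[x,y]$ precisely because $[x,y]$ is central. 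So once more $g$ commutes with a distinct conjugate.

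In both cases Corollary~\ref{C.trinomial} then yields the trinomial element generating the improper ideal. The main obstacle, as indicated, is the central case $A\subseteq Z(G)$: one cannot hope to find a noncommuting pair whose commutator is automatically central in an arbitrary nonabelian group (nonabelian simple groups, with trivial center, are exactly the counterexamples), and it is the solvability hypothesis—concretely, descending to the last two terms of the derived series—that forces the existence of such a pair. A secondary point to be careful about is the commutator convention of~\eqref{d.[g,h]}, which is what makes the identity $y^{-1}xy=x\,[x,y]$ come out with the correct sign.
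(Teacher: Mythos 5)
Your proof is correct, and it shares the paper's skeleton: both arguments work with the last two nontrivial terms $A=G^{(\ell-1)}$ and $B=G^{(\ell-2)}$ of the derived series, and both split into one case settled by the fact that $A$ is abelian and normal (so an element of $A$ commutes with each of its conjugates) and one case settled by a direct commutation check inside $B$. What differs is the pivot of the case analysis. The paper fixes a noncommuting pair $h_1,h_2\in B$ and asks whether $h_1$ commutes with $h_2^{-1}h_1h_2$: if so, $(g,h)=(h_1,h_2)$ works; if not, $(g,h)=([h_1,h_2],h_1)$ works, since then $g$ and $h^{-1}gh$ are distinct elements of $A$. You instead ask whether $A\subseteq Z(G)$: if not, any noncentral $g\in A$ works; if so, any noncommuting $x,y\in B$ works, because $y^{-1}xy=x[x,y]$ with $[x,y]\in A$ central. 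The two dichotomies are crosswise related: in your central case the paper's first alternative holds automatically for every pair (a central $[h_1,h_2]$ commutes with $h_1$), while the paper's second alternative can only occur in your noncentral case. So the proofs produce the same kinds of witnesses; yours pivots on a structural property of $G$ rather than on the behavior of a chosen pair, which buys a stronger conclusion in the central case (\emph{every} noncommuting pair of $B$ is a witness) at the cost of a slightly longer write-up, whereas the paper's version is more economical. One side remark of yours should be corrected: nonabelian simple groups are not ``exactly'' the nonabelian groups lacking a noncommuting pair with central commutator---any nonabelian group with trivial center, such as $S_3$, also lacks one---but this aside plays no role in your argument.
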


\begin{proof}
Let $h_1$ and $h_2$ be non-commuting elements of the next-to-last
nontrivial term of the derived series of the solvable group $G.$

If $h_1$ commutes with $h_2^{-1}\,h_1\,h_2,$
then we have the asserted relation, with $g=h_1,$ $h=h_2.$

On the other hand, if $h_1$ does not commute with $h_2^{-1}\,h_1\,h_2,$
then it does not commute with
$h_1^{-1}(h_2^{-1}\,h_1\,h_2) = [h_1,h_2].$
Letting $g = [h_1,h_2]$ and $h=h_1,$ we see that the distinct
elements $g$ and $h^{-1}\,g\,h$ both lie in the last nontrivial term of
the derived series, so they commute with each other, giving
the desired relation.

Corollary~\ref{C.trinomial} gives the final conclusion.
\end{proof}

Though Corollary~\ref{C.trinomial} was aimed at getting this
result, not every group to which that corollary applies need
have a nonabelian solvable subgroup.
Indeed, though the commutativity of $g$ with $h^{-1}g\,h$ implies that
in the sequence of elements $h^{-i} g\,h^i$ $(i\in\Z),$ adjacent
elements commute, this does not force terms which are not adjacent  to
commute, as would be needed to get solvability in an obvious way.
That the universal example of Corollary~\ref{C.trinomial},
$\langl g,h\mid [g,\,h^{-1}gh]=1\rangl,$ contains no nonabelian
solvable subgroup would be lengthy to prove, but I will sketch
an easier example.

Let $F_0$ and $F_1$ be free groups on countably infinite families of
generators, written $(g_{2i})_{i\in\Z}$ and $(g_{2i+1})_{i\in\Z}$
respectively,
let $H$ be an infinite cyclic group $\langl h\rangl,$ and let $G$
be the semidirect product $H\ltimes (F_0\times F_1)$ with
$H$ acting on $F_0\times F_1$ by $h^{-1} g_j h = g_{j+1}.$
It is not hard to verify that the subgroup of $G$ generated by
any two noncommuting elements contains two noncommuting elements
$g,\ g'$ of $F_0\times F_1.$
Such elements must have noncommuting projections either
in $F_0$ or in $F_1;$ but two noncommuting elements of a free
group are free generators of a free subgroup.
Hence any noncommutative subgroup of $G$ contains a free
subgroup on two generators, and so is not solvable.

Curiously, though Corollary~\ref{C.solvable} dashed the hope that
one might be able to prove a positive answer to Question~\ref{Q.free}
by showing that
groups free in appropriate varieties of solvable or nilpotent groups
are resistant, one can nonetheless use the theory of
free nilpotent groups (the $F/F_{i+1}$ in the proof below) to get
a positive result:

\begin{proposition}\label{P.no_trinom}
No free group $F$ contains elements $f,\ g,\ h$ as in
Theorem~\ref{T.trinomial}.
\end{proposition}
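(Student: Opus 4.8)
The plan is to refute the existence of such $f,g,h$ by a depth (weight) argument in the lower central series $F=F_1\supseteq F_2\supseteq\cdots,$ where $F_{i+1}=[F_i,F].$ First I would unwind the hypothesis of Theorem~\ref{T.trinomial}. If $g$ and $h$ commuted, then $[g,h]=1,$ the normal subgroup $N$ generated by $[g,h]$ would be trivial, and it could not contain the nonidentity element $f;$ so I may assume $[g,h]\neq 1,$ and in particular $g\neq 1.$ Since $f\neq 1$ commutes with $g,$ and the centralizer of any nonidentity element of a free group is infinite cyclic, both $f$ and $g$ are powers of a common root $w\neq 1,$ say $g=w^{r}$ with $r\geq 1$ and $f=w^{k}$ with $k\neq 0.$

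Next I would locate $w$ in the lower central series. Because free groups are residually nilpotent, $\bigcap_i F_i=\{1\},$ so $w\neq 1$ lies in $F_d\setminus F_{d+1}$ for a unique $d\geq 1.$ The crucial point is that being a power of $w$ pins $g$ at the same depth: since $g=w^{r}\in F_d,$ the commutator $[g,h]$ lies in $[F_d,F]=F_{d+1},$ and hence so does the whole normal subgroup $N$ it generates, $F_{d+1}$ being normal in $F.$ On the other hand $f=w^{k}$ does \emph{not} fall into $F_{d+1}:$ the factor $F_d/F_{d+1}$ is torsion-free (it is a homogeneous component of the free Lie ring on the generators of $F$), so the image of $w$ there is a nonzero element whose $k$-th multiple $(k\neq 0)$ is again nonzero. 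Thus $f\notin F_{d+1}\supseteq N,$ contradicting $f\in N.$

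The substance of the argument lies entirely in two standard structural facts about free groups—that centralizers are infinite cyclic, and that the lower central quotients $F_i/F_{i+1}$ are torsion-free (equivalently, that the associated graded ring is the free Lie ring)—combined with the observation that these force $f$ and $[g,h]$ to sit at the adjacent, distinct levels $d$ and $d+1.$ The one step where freeness of $F$ does real work, and which I expect to be the point to state carefully, is the torsion-freeness: without it a proper power $w^{k}$ of a depth-$d$ element could in principle drop into $F_{d+1},$ and the separation between $f$ and $N$ would collapse. Equivalently, one can house the whole contradiction in the free nilpotent quotient $F/F_{d+1},$ where $[g,h],$ and hence all of $N,$ dies while $w^{k}$ survives—which is how the hint about the quotients $F/F_{i+1}$ is meant to be read.
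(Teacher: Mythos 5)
Your proof is correct and takes essentially the same route as the paper's: both obtain a common root $w$ (the paper's $g_0$) for $f$ and $g,$ place it at the last term $F_d$ of the lower central series containing it, observe that $N\subseteq[F_d,F]=F_{d+1}$ while torsion-freeness of $F_d/F_{d+1}$ keeps $f$ out of $F_{d+1}.$ The only cosmetic differences are that the paper gets the common root from the fact that a commutative subgroup of a free group is infinite cyclic rather than from the centralizer theorem, and that it first reduces to finitely generated $F$ before citing Hall's theorem on the lower central quotients, a technicality your appeal to the free Lie ring in arbitrary rank also covers.
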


\begin{proof}
Suppose a free group $F$ has such elements $f,\ g,\ h.$
Clearly, they will have the same property in some
finitely generated subgroup of $F,$ so since a subgroup
of a free group is again free, we may assume $F$ finitely generated.

Now the subgroup of $F$ generated by $f$ and $g,$ being
commutative but free, has to be free on a single generator $g_0.$
Let $g_0$ lie in the $\!i\!$-th term
of the lower central series of $F$ \cite[\S10.2]{MH}, which
we shall write $F_i,$ but not in $F_{i+1}.$
By \cite[Theorem~11.2.4, p.\,175]{MH}, $F_i/F_{i+1}$ is free abelian,
hence in particular, torsion-free, so $f$ and $g,$ being
nonzero powers of $g_0,$ have nonidentity images in that group;
i.e., neither lies in $F_{i+1}.$

But since $f$ lies in the normal subgroup of $F$
generated by $[g,h],$ a commutator involving $g,$
it {\em must} lie in $F_{i+1}.$
This contradiction completes the proof.
\end{proof}

Let us show using a similar argument that for $F$ a free
group, no {\em binomial}
element of $k\,F$ can generate the improper ideal.
By the observations and results of~\S\ref{S.binomial},
it suffices to prove that if $g\in F-\{1\},$ then its
image $\overline{g}\in F/[g,F]$ has infinite order.
To this end, let $F_i$ be the last term of the lower central series
of $F$ which contains~$g,$ and note that the image of $g$ in
$F_i/F_{i+1}$ has infinite order, since that group is free abelian.
Moreover, $[g,F]\subseteq [F_i,F] = F_{i+1},$ so
the infinite order condition goes over to $\overline{g}\in F/[g,F],$
as required.
Hence

\begin{proposition}\label{P.no_binom}
In a free group, every nonidentity element satisfies the equivalent
conditions of Theorem~\ref{T.g-c_proper}.
Hence in the group algebra of a free group over a field, no
binomial element generates the improper ideal.\qed
\end{proposition}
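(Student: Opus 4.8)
The plan is to verify condition~(i) of Theorem~\ref{T.g-c_proper}---that for every $g\in F-\{1\}$ the image $\overline{g}\in F/[g,F]$ has infinite order---and then let the machinery of \S\ref{S.binomial} do the rest. Indeed, once (i) is in hand, the equivalence (i)$\iff$(iv) of that theorem shows that $g-c$ generates a proper ideal for every field $k$ and every $c\neq 0$; and since an arbitrary binomial $c_1 g_1 + c_2 g_2$ can be put, by dividing by the unit $c_1 g_2$, into the form $g-c$ with $g\neq 1$ and $c\neq 0$, this yields the binomial conclusion. So the whole statement reduces to the single infinite-order assertion.

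To prove that assertion I would work in the lower central series $F=F_1\supseteq F_2\supseteq\cdots$ of $F$. Because $g\neq 1$ and this series has trivial intersection in a free group, there is a largest index $i$ with $g\in F_i$; fix it, so that $g\notin F_{i+1}$. The decisive input is that $F_i/F_{i+1}$ is free abelian, hence torsion-free; the image of $g$ in it is therefore nonzero of infinite order. Now $[g,F]\subseteq[F_i,F]=F_{i+1}$, so any relation $g^n\in[g,F]$ would give $g^n\in F_{i+1}$, i.e.\ $n$ times the image of $g$ would vanish in the torsion-free group $F_i/F_{i+1}$---impossible for $n\neq 0$. Hence $\overline{g}$ has infinite order in $F/[g,F]$, which is exactly~(i).

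The one nonroutine ingredient, and the step I would single out as the crux, is the torsion-freeness of the quotients $F_i/F_{i+1}$. This is not elementary: it comes from the structure theory of free groups, where these quotients are identified with the homogeneous components of a free Lie ring and shown to be free abelian; I would invoke this as a cited theorem rather than reprove it. Everything else---the normalization of binomials, the inclusion $[g,F]\subseteq F_{i+1}$, and the passage from infinite order modulo $F_{i+1}$ to infinite order modulo $[g,F]$---is purely formal.
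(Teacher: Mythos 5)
Your proof is correct and is essentially the paper's own argument: reduce to condition~(i) of Theorem~\ref{T.g-c_proper}, take the last term $F_i$ of the lower central series containing $g,$ use that $F_i/F_{i+1}$ is free abelian (cited, not reproved), and transfer the infinite-order conclusion via $[g,F]\subseteq[F_i,F]=F_{i+1}.$ The only differences are cosmetic: you make explicit the residual nilpotence of free groups needed for a largest $i$ with $g\in F_i$ to exist, whereas the paper instead remarks that one should reduce to finite rank (or extend the cited theorem) before invoking the free-abelian-quotient result.
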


(Strictly speaking, to call on \cite[Theorem~11.2.4]{MH} we should,
as in the proof of Proposition~\ref{P.no_trinom},
have reduced to the case of $F$ free of finite rank.
That reduction is, again, straightforward.
Alternatively, the theorem cited is easily seen to imply the
corresponding statement for free groups of infinite ranks.)

\section{Some variant conditions}\label{S.variant}

The results of the two preceding sections show that many sorts
of groups $G$ have group algebras in which some non-monomial
element generates the improper ideal.
But cases where group algebras contain non-monomial
{\em invertible} elements are more restricted.
Indeed, until recently it was an open question whether
the group algebra $k\,G$ of a torsion-free group $G$
over a field $k$ could have a non-monomial invertible element, but a
case in which this does occur has recently been found~\cite{GGardam}.
(As noted in the last paragraph of Section~1 of~\cite{GGardam},
the conjecture that no such units existed was
often called ``Kaplansky's unit conjecture'', but
had been raised much earlier; in fact, it occurs
in G.\,Higman's 1940 Ph.D.\ thesis; see
\cite[last sentence of \S{7}]{Sandling}.)

In the opposite direction, if $G$ has torsion elements, it is known
that, with precisely three exceptions, $k\,G$ always has
nontrivial units, though not necessarily of the form $g-c$:
\begin{equation}\begin{minipage}[c]{35pc}\label{d.invertible}
(See \cite[Lemma~13.1.1]{Passman}.)
If $k$ is a field, and $G$ a group having a nonidentity element of
finite order, then $k\,G$ contains a non-monomial invertible element
unless $k\cong \Z/2\Z$ and $G\cong Z_2$ or $Z_3,$
or $k\cong \Z/3\Z$ and $G\cong Z_2.$
In those three cases $k\,G$ has no such element.
\end{minipage}\end{equation}

Returning to general non-monomial elements
which generate the improper ideal, let us note that \eqref{d.invertible}
and Theorem~\ref{T.g-c_improper} between them leave open

\begin{question}\label{Q.invertible}
Suppose $G$ is a group containing an element $g$ which has infinite
order, but whose image in $G/[g,G]$ has finite order $n,$ and that
$k$ is a finite field all of whose nonzero elements $c$ satisfy
$c^n=1$ \textup{(}equivalently,
such that ${\rm card}(k-\{0\})$ divides $n).$
Must $k\,G$ contain a non-monomial element which
generates the improper ideal?
\end{question}

Do the questions we have been studying have interesting extensions
to group rings $D\,G$ over division rings $D$?
In this case, there is no evident analog of ``resistant groups'':
Given any noncommutative division ring $D$ and
any nontrivial group $G,$ if we take any $g\in G-\{1\}$ and
any noncentral element $c\in D,$ say with $ac - ca\neq 0,$
then the $\!2\!$-sided ideal generated by $g-c$ will
be improper, since it contains $a(g-c)-(g-c)a = ac-ca\in D-\{0\}.$
The obvious analog of~\eqref{d.invertible} with
$k$ replaced by a division ring $D$ holds, since if $D$
is not itself a field, it will contain a subfield $k$ properly
larger than its prime subfield,
allowing us to apply~\eqref{d.invertible}.
So I am not aware of any interesting directions for investigation
of group rings over division rings other than fields.

\section{Closure properties of the class of resistant groups}\label{S.univ}

We now return to the main theme of this note.
To examine the class of resistant groups and some
related classes more closely, we make

\begin{definition}[cf.~Definition~\ref{D.resistant}]%
\label{D.k-resistant}
If $G$ is a group and $k$ a field, $G$ will be called
{\em $\!k\!$-resistant} if for every $r\in k\,G$ with support
of cardinality $>1,$ the ideal of $k\,G$
generated by $r$ is proper.

If $\mathcal{K}$ is a class of fields, a group $G$ will be
called  $\!\mathcal{K}\!$-resistant if it is $\!k\!$-resistant
for all $k\in\mathcal{K}.$
\end{definition}

A key fact will be

\begin{proposition}\label{P.univ_G,k}
There exists a set of universal sentences of the form
\begin{equation}\begin{minipage}[c]{35pc}\label{d.univ_G,k}
$(\forall\ g_1,\dots,g_n\in G,\ \forall\ c_1,\dots,c_{n'}\in k)\ %
P(g_1,\dots,g_n,\,c_1,\dots,c_{n'}),$
\end{minipage}\end{equation}
where in each such sentence, $P$ is a Boolean expression in equations in
$g_1,\dots,g_n$ under group operations, and
equations in $c_1,\dots,c_{n'}$ under field operations
\textup{(}with $P,$ $n$ and $n'$ depending on the
sentence~\eqref{d.univ_G,k} in
question\textup{)}, such that for a group $G$ and a field $k,$ $G$
is $\!k\!$-resistant if and only if $G$ and $k$ satisfy all the
conditions~\eqref{d.univ_G,k} in this set.
\end{proposition}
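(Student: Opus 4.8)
We need to show that $k$-resistance is axiomatizable by universal sentences of a specific syntactic shape: quantify over finitely many group elements $g_i$ and finitely many field elements $c_j$, then assert a Boolean combination of group-word equations and field-polynomial equations. The content is that "the ideal generated by $r$ is proper, for all non-monomial $r$" can be reformulated as such a family of universal conditions.

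**The plan.** Let me think carefully about what it means for a single $r = \sum_{g\in\supp(r)} c_g g$ with $|\supp(r)| = N > 1$ to generate the *improper* ideal. This happens iff $1$ lies in the ideal $(r)$, i.e. iff there is an equation $\sum_{\ell=1}^{L} a_\ell\, r\, b_\ell = 1$ in $kG$, where $a_\ell, b_\ell \in kG$. Expanding, each $a_\ell$ and $b_\ell$ is a finite $k$-linear combination of group elements. So the whole witness is built from: the group elements appearing in $\supp(r)$, the group elements appearing in the $a_\ell$ and $b_\ell$, the scalars $c_g$, and the scalars in the $a_\ell, b_\ell$. Collecting the group elements into a finite tuple $g_1,\dots,g_n$ and all scalars into a finite tuple $c_1,\dots,c_{n'}$, the statement "this particular collection of data witnesses that $r$ generates the improper ideal" becomes a finite conjunction of equations: the products of the named group elements collapse in $G$ in a prescribed pattern (group-word equations), and the resulting $k$-coefficients cancel to leave exactly $1$ (field-polynomial equations).

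**Key steps in order.** First, fix a "combinatorial type" of a failure: a choice of the integers $N$ (support size of $r$), $L$ (number of terms in the two-sided sum), and the word-lengths of each $a_\ell, b_\ell$ as $k$-combinations; also fix the abstract pattern describing which of the named group elements coincide after multiplication. There are countably many such types. For each fixed type $\tau$, I would write down a single universal sentence $S_\tau$ of the form \eqref{d.univ_G,k} asserting: it is *not* the case that the named group elements satisfy the coincidence pattern $\tau$ while the named scalars satisfy the cancellation equations forcing $\sum a_\ell r b_\ell = 1$. Concretely, $P_\tau$ is the negation of a conjunction, hence a Boolean expression in group equations and field equations, exactly as required. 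Second, I would argue that $G$ is $k$-resistant iff $G,k$ satisfy $S_\tau$ for every type $\tau$: if some non-monomial $r$ generates the improper ideal, then an explicit witness $\sum a_\ell r b_\ell = 1$ exists, its data realizes some type $\tau$, and that realization violates $S_\tau$; conversely, a violation of some $S_\tau$ directly exhibits a non-monomial $r$ with $1 \in (r)$.

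**The main obstacle.** The genuinely delicate point is encoding, within a single first-order sentence of the allowed form, the assertion that the coefficients of $\sum_\ell a_\ell r b_\ell$, after the named group elements collapse according to pattern $\tau$, sum to the coefficient pattern of $1 \in kG$ --- i.e. coefficient $1$ on the identity and $0$ elsewhere. The subtlety is that "elsewhere" ranges over the distinct group elements produced, and which products are equal is itself governed by the group equations among the $g_i$; so the field-side cancellation equations must be written relative to the fixed coincidence pattern $\tau$, not for all possible coincidences at once. This is exactly why we must split into the combinatorial types $\tau$ and produce one sentence per type: within a fixed $\tau$ the set of distinct monomials is determined, so the coefficient-cancellation becomes a finite system of polynomial equations in the $c_j$, which is expressible. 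I would also need to confirm two finiteness points: that only finitely many group-element symbols and scalar symbols occur in any single witness (clear, since ideal membership uses a finite sum of finite $kG$-elements), and that the identity element $1\in G$ and the scalar $1\in k$ are expressible within $P$ (the former as, say, $g_i g_i^{-1}$ or by adding a symbol constrained to be trivial, the latter via the field equation $c = c\cdot c$ fails — better, note that $P$ is permitted to use field equations, and "$x=1$" is the field equation $x\cdot x = x \wedge x \neq 0$, or more simply one may name a scalar and impose $c_j \cdot c_{j'} = c_j$ to pin down the multiplicative identity). Once these encoding details are in place, the equivalence is a routine unwinding of the definition of ideal membership, and the set $\{S_\tau\}_\tau$ is the desired family.
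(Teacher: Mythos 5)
Your overall strategy is the same as the paper's: normalize a witness to improperness as a finite sum of terms (scalar)$\cdot$(group element)$\cdot r\cdot$(group element), enumerate the countably many combinatorial types --- sizes together with a coincidence pattern specifying which of the resulting products of named group elements fall together and which class has common value $1$ --- and, for each type, negate the conjunction of the group-theoretic pattern and the field-theoretic cancellation equations saying the class sums are $1$ (identity class) and $0$ (all others). Your ``main obstacle'' and its resolution (cancellation equations can only be written relative to a fixed pattern, so one splits into types) is exactly the paper's device; whether one produces a separate sentence per pattern, or conjoins all patterns for fixed sizes into a single sentence as the paper does, is immaterial. (Your worry about expressing $1\in G$ and $1\in k$ is a non-issue: both languages carry identity constants, and in any case $g\,g^{-1}$ names the group identity.)

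There is, however, a genuine gap in the equivalence you assert. The conjunction you negate consists only of the coincidence pattern and the cancellation equations, and nothing in it forces the named data to describe a genuinely \emph{non-monomial} $r$: the negated conjunction must also say that the support elements $g_1,\dots,g_N$ of $r$ are pairwise distinct (group inequations) and --- crucially, since this cannot be read into any coincidence pattern among group elements --- that their coefficients $c_1,\dots,c_N$ are all nonzero (field inequations). Without the latter, your step ``a violation of some $S_\tau$ directly exhibits a non-monomial $r$ with $1\in(r)$'' fails, and the sentences are too strong: they fail even in resistant pairs. Concretely, take $G=\langle x\rangle$ infinite cyclic and $k=\mathbb{Q},$ so that $G$ is $k$-resistant; for the type with $N=2,$ one sandwiching term, pattern $g_3g_1g_4=1,$ $g_3g_2g_4\neq 1,$ $g_1\neq g_2,$ and cancellation equations $c_3c_1=1,$ $c_3c_2=0,$ the assignment $g_1=x,$ $g_2=x^2,$ $g_3=x^{-1},$ $g_4=1,$ $c_1=c_3=1,$ $c_2=0$ satisfies the entire conjunction (the ``witnessed'' element $r=x+0\cdot x^2$ is a monomial precisely because $c_2=0$), so $S_\tau$ fails in $(\Z,\mathbb{Q})$ and your axiom set would wrongly declare $\Z$ not $\mathbb{Q}$-resistant; splitting the scalar $c_3$ gives analogous assignments for types with more sandwiching terms, so restricting to two or more terms does not help. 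This is exactly why the paper's proof builds into its conjunction, as items (i) and (ii), that $g_1,\dots,g_m$ are distinct and that $c_1,\dots,c_m$ are nonzero; adding those two clauses to your $S_\tau$ repairs the argument, after which your proof coincides with the paper's.
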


\begin{proof}[Idea of proof]
We want to examine all possible ways that
the ideal of a group algebra
$k\,G$ generated by a non-monomial element
\begin{equation}\begin{minipage}[c]{35pc}\label{d.r=}
$r\ =\ \sum_{i=1}^m c_i\,g_i$
\end{minipage}\end{equation}
$(c_i\in k,$ $g_i\in G,$ $m\geq 2)$
might contain $1\in k\,G,$ and construct a set of
sentences which together say that none of these
possibilities occurs.
The general element of the ideal generated by $r$ has the form
\begin{equation}\begin{minipage}[c]{35pc}\label{d.sum}
$\sum_{j=1}^{m'} c_{m+j}\ g_{m+j}\ r\ g_{m+m'+j}$
\end{minipage}\end{equation}
(where $m$ is as in~\eqref{d.r=},
$m'\geq 0,$ $c_i\in k$ for $m<i\leq m+m',$ and $g_i\in G$ for
$m<i\leq m+2m').$
Clearly, this will equal~$1\in k\,G$ for given
choices of $m',$ $c_{m+1},\dots,c_{m+m'},$
and $g_{m+1},\dots,g_{m+2m'}$ if and only if, when we
expand~\eqref{d.sum} using~\eqref{d.r=}, and note which of the products
\begin{equation}\begin{minipage}[c]{35pc}\label{d.ggg}
$g_{m+j}\ g_i\ g_{m+m'+j}$ $(1\leq i\leq m,\ 1\leq j\leq m')$
\end{minipage}\end{equation}
in the expansion are equal to which others -- an equivalence
relation on product-expressions of this form -- it turns out that the
common value of~\eqref{d.ggg} for one of these
equivalence classes is~$1\in G,$ and the sum of the
coefficients $c_{m+j}\,c_i$ for that equivalence class is $1\in k,$
while for each of the other equivalence classes, the
corresponding sum is~$0.$

So let us construct the sentences~\eqref{d.univ_G,k} as follows.
Each will be determined by a choice of $m,m'\geq 2$
(since only in such cases can~\eqref{d.r=} and~\eqref{d.sum}
describe a non-monomial element $r$ yielding $1$ in the ideal it
generates).
Given these, we let $n=m+2m',$ $n'=m+m',$
and let these index symbols $g_1,\dots,g_n,$ $c_1,\dots,c_{n'}.$
We then list all equivalence relations on the set of
product-expressions~\eqref{d.ggg}, and for each such equivalence
relation, all choices of one distinguished equivalence class.
For each such equivalence relation and distinguished
class, we write down the conjunction of the
list of group-theoretic and
field-theoretic equalities and negations-of-equalities saying that
(i)~$g_1,\dots,g_m$ are distinct,
(ii)~$c_1,\dots,c_{m}$ are nonzero,
(iii)~the products $g_{m+j}\,g_i\,g_{m+m'+j}$ in
each equivalence class are equal, and are unequal
to those in other equivalence classes,
(iv)~for the distinguished equivalence class, the common value
of these products is the group element~$1,$
(v)~the sum, over the distinguished equivalence class,
of the field elements $c_{m+j}\,c_i$ is~$1,$ and
(vi)~the corresponding
sum over each of the other equivalence classes is~$0.$

For each choice
of $m$ and $m',$ we form the conjunction $P,$ over all choices of
equivalence relations and distinguished equivalence classes as above,
of the {\em negations} of the resulting conjunctions of formulas.
The resulting sentence~\eqref{d.univ_G,k},
applied to any group $G$ and field $k,$
says that no $\!m\!$-term element $r\in k\,G$
and $\!m'\!$-term expression~\eqref{d.sum}
constitute a counterexample to the $\!k\!$-resistance of $G.$
Doing this for each choice of $m,m'\geq 2,$ we get the desired
family of sentences~\eqref{d.univ_G,k}.

(Incidentally, not all of the equivalence relations referred
to above will necessarily be consistent with a group structure,
nor need all the systems of equations and negations
of equations in the $\!c\!$'s be
consistent with the properties of fields.
This does not matter: if a conjunction so used in $P$
cannot actually occur, its presence (in negated form,
as above) will have no
effect on the set of $(G,k)$ satisfying~\eqref{d.univ_G,k}.)
\end{proof}

We now modify slightly the form of our conditions~\eqref{d.univ_G,k}.
(And though I have spoken above of ``negations of equalities''
to emphasize that we are working with Boolean expressions in
equalities, I will call them inequalities from here on.)

\begin{proposition}\label{P.univ_vee}
Any sentence~\eqref{d.univ_G,k} as described in the
statement of Proposition~\ref{P.univ_vee} is equivalent to a finite
conjunction of sentences of the form
\begin{equation}\begin{minipage}[c]{35pc}\label{d.univ_vee}
$((\forall\ g_1,\dots,g_n\in G)\ P'(g_1,\dots,g_n))\vee
((\forall\ c_1,\dots,c_{n'}\in k)\ P''(c_1,\dots,c_{n'})),$
\end{minipage}\end{equation}
where each $P'$ is a {\em disjunction} of equations and
inequalities of group-theoretic expressions
in $g_1,\dots,g_n,$ and each $P''$ a disjunction of equations and
inequalities of field-theoretic expressions
in $c_1,\dots,c_{n'}.$
\end{proposition}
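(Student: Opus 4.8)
The plan is to exploit the fact that in the Boolean expression $P$ of~\eqref{d.univ_G,k}, every atomic subformula is \emph{either} a group-theoretic equation among $g_1,\dots,g_n$ \emph{or} a field-theoretic equation among $c_1,\dots,c_{n'}$; no single atom mixes the two sorts. Regarding these atoms as propositional variables, I would first rewrite $P$ in conjunctive normal form, obtaining $P\equiv\bigwedge_k C_k$, where each clause $C_k$ is a disjunction of literals (atoms and negated atoms). Because the atoms are segregated by sort, each clause splits as $C_k = C'_k\vee C''_k$, where $C'_k$ collects the group-theoretic literals of $C_k$ (equations and inequalities in the $g$'s) and $C''_k$ its field-theoretic literals (equations and inequalities in the $c$'s)---exactly the ingredients called for in~\eqref{d.univ_vee}.

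Next I would distribute the universal quantifier block $(\forall\,\vec g,\vec c)$ over the conjunction, so that the sentence~\eqref{d.univ_G,k} becomes the finite conjunction $\bigwedge_k (\forall\,\vec g,\vec c)(C'_k\vee C''_k)$. It then remains only to check, clause by clause, the equivalence
\[
(\forall\,\vec g,\vec c)\,\bigl(C'_k(\vec g)\vee C''_k(\vec c)\bigr)
\ \Longleftrightarrow\
\bigl((\forall\,\vec g)\,C'_k(\vec g)\bigr)\vee\bigl((\forall\,\vec c)\,C''_k(\vec c)\bigr).
\]
The implication $\Leftarrow$ is immediate: if either disjunct on the right holds, then the inner disjunction holds for every choice of the remaining variables. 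For $\Rightarrow$ I would argue contrapositively: if both disjuncts on the right fail, there is a tuple $\vec g_0\in G^n$ with $\neg C'_k(\vec g_0)$ and, \emph{independently}, a tuple $\vec c_0\in k^{n'}$ with $\neg C''_k(\vec c_0)$, and evaluating at $(\vec g_0,\vec c_0)$ then violates the left-hand sentence. Each clause thereby takes the form~\eqref{d.univ_vee}, and their conjunction is the desired reformulation.

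The crux---indeed essentially the only content---is the witness-independence used in the $\Rightarrow$ direction: because $\vec g$ ranges over $G$ and $\vec c$ over $k$ through two \emph{separate}, non-interacting quantifier blocks, a counterexample to $C'_k$ and a counterexample to $C''_k$ can be chosen in the two domains independently and then combined into a single counterexample. It is precisely here that the absence of mixed atoms is used: had some atom involved both a $g$ and a $c$, the split $C_k=C'_k\vee C''_k$ would not exist and the distribution would fail. The one remaining wrinkle is bookkeeping for degenerate clauses: a clause all of whose literals are group-theoretic has $C''_k$ an empty disjunction, so $(\forall\,\vec c)\,C''_k$ is false and the sentence collapses to $(\forall\,\vec g)\,C'_k$; to preserve the uniform shape~\eqref{d.univ_vee} one may take the missing $P''$ (or $P'$) to be a single contradictory literal such as $c_1\neq c_1$, which is harmless. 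With these conventions the conjunction over all $k$ is exactly a finite conjunction of sentences~\eqref{d.univ_vee}, completing the argument.
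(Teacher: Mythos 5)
Your proposal is correct and follows essentially the same route as the paper's proof: put $P$ in conjunctive normal form, distribute the universal quantifiers over the conjunction, split each clause by sort into $P'\vee P''$, and pass the disjunction through the quantifier block using the disjointness of the variable sets. Your explicit contrapositive/witness-independence argument and the handling of one-sorted clauses merely spell out details the paper leaves implicit.
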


\begin{proof}
Any Boolean expression in a set of relations is equivalent
to a conjunction of disjunctions of
families of those relations and their negations.
Applying this to the expression $P(g_1,\dots,g_n,\,c_1,\dots,c_{n'})$
in~\eqref{d.univ_G,k}, and noting that
universal quantification respects conjunctions, we see
that each instance of~\eqref{d.univ_G,k}
is equivalent to the conjunction of a finite family of formulas
$(\forall\ g_1,\dots,g_n\in G,\ \forall\ c_1,\dots,c_{n'}\in k)\ %
P^*(g_1,\dots,g_n,\,c_1,\dots,c_{n'}),$ in each of which
$P^*(g_1,\dots,g_n,\,c_1,\dots,c_{n'})$ is a disjunction
of equations and inequalities.
Each such equation or inequality involves only
the group elements or only the field elements,
so sorting them accordingly, we can rewrite each
$P^*(g_1,\dots,g_n,\,c_1,\dots,c_{n'})$ as
$P'(g_1,\dots,g_n)\vee P''(c_1,\dots,c_{n'}).$
Thus, each sentence~\eqref{d.univ_G,k} is equivalent to a
{\em family} of sentences
\begin{equation}\begin{minipage}[c]{35pc}\label{d.univ_pre-vee}
$(\forall\ g_1,\dots,g_n\in G,\ \forall\ c_1,\dots,c_{n'}\in k)\ %
P'(g_1,\dots,g_n)\vee P''(c_1,\dots,c_{n'}).$
\end{minipage}\end{equation}

Universal quantification does not in general respect disjunction.
(E.g., the statement about
integers, that for all $n,$ either $n$ is even or $n$
is odd, does not entail that either all integers are even
or all are odd.)
But the fact that $P'(g_1,\dots,g_n)$ and $P''(c_1,\dots,c_{n'})$
involve disjoint sets of variables implies that here one {\em can}
pass the disjunction through the quantification, and
rewrite~\eqref{d.univ_pre-vee} as~\eqref{d.univ_vee}.
\end{proof}

We can now move the conditions on field elements entirely
out of our formulas.

\begin{theorem}\label{T.univ_G,K}
For every class $\mathcal{K}$ of fields, there is a
set of sentences of the form
\begin{equation}\begin{minipage}[c]{35pc}\label{d.univ_G,K}
$(\forall\ g_1,\dots,g_n\in G)\ P(g_1,\dots,g_n),$
\end{minipage}\end{equation}
where each $P$ is a disjunction of equations and
inequalities of group-theoretic expressions in $g_1,\dots,g_n,$
such that the $\!\mathcal{K}\!$-resistant groups are
precisely the groups that
satisfy all these sentences~\eqref{d.univ_G,K}.

In particular, taking for $\mathcal{K}$ the class of {\em all} fields,
one gets a set of sentences~\eqref{d.univ_G,K} characterizing the
resistant groups.
\end{theorem}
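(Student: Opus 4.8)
The plan is to combine the two preceding propositions and then push the quantification over the class $\mathcal{K}$ inside the individual sentences. By Proposition~\ref{P.univ_G,k} together with Proposition~\ref{P.univ_vee}, there is a set $\{\,\sigma_i\mid i\in I\,\}$ of sentences, each of the form~\eqref{d.univ_vee} (and $I$ is a genuine set, indeed countable, since it refines the countable family produced in Proposition~\ref{P.univ_G,k}), such that for every group $G$ and field $k,$ the group $G$ is $\!k\!$-resistant if and only if $(G,k)$ satisfies every $\sigma_i.$ I would write the $\!i\!$-th such sentence as $A_i(G)\vee B_i(k),$ where $A_i(G)$ is the group-theoretic disjunct $(\forall\ g_1,\dots,g_n\in G)\ P'(g_1,\dots,g_n)$ (depending on $G$ alone) and $B_i(k)$ is the field-theoretic disjunct $(\forall\ c_1,\dots,c_{n'}\in k)\ P''(c_1,\dots,c_{n'})$ (depending on $k$ alone).

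By Definition~\ref{D.k-resistant}, $G$ is $\!\mathcal{K}\!$-resistant precisely when it is $\!k\!$-resistant for every $k\in\mathcal{K},$ that is, when $A_i(G)\vee B_i(k)$ holds for all $i\in I$ and all $k\in\mathcal{K}.$ Since the truth of $A_i(G)$ does not involve $k,$ I can interchange the universal quantifier over $k\in\mathcal{K}$ with the disjunction, exactly as disjunction was passed through quantification at the end of the proof of Proposition~\ref{P.univ_vee}: for each fixed $i,$ the condition that $A_i(G)\vee B_i(k)$ hold for all $k\in\mathcal{K}$ is equivalent to $A_i(G)\vee\bigl((\forall\ k\in\mathcal{K})\ B_i(k)\bigr).$

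The key observation is now that the truth value of $(\forall\ k\in\mathcal{K})\ B_i(k)$ depends only on $\mathcal{K}$ and $i,$ and not at all on $G.$ Calling this truth value $\beta_i,$ each clause is either vacuous (when $\beta_i$ is true, so the disjunction holds for every $G$) or reduces to the bare group-theoretic sentence $A_i(G)$ (when $\beta_i$ is false). I would therefore take the desired family of sentences~\eqref{d.univ_G,K} to be $\{\,A_i\mid i\in I,\ \beta_i\text{ is false}\,\},$ a subset of a set and hence itself a set. Each $A_i$ already has the required shape $(\forall\ g_1,\dots,g_n\in G)\ P'(g_1,\dots,g_n)$ with $P'$ a disjunction of equations and inequalities in the $g$'s, and by the chain of equivalences above a group $G$ satisfies every member of this family if and only if it is $\!\mathcal{K}\!$-resistant. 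Specializing $\mathcal{K}$ to the class of all fields yields the final assertion.

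I expect no serious obstacle: the substantive work was already done in isolating the variables in Proposition~\ref{P.univ_vee}. The only points needing care are the legitimacy of the quantifier interchange — valid precisely because $A_i(G)$ is constant in $k$ — and the recognition that testing each field-side disjunct against the \emph{entire} class $\mathcal{K}$ collapses it to a constant $\beta_i,$ which can then be absorbed into the mere choice of which group-theoretic sentences to retain.
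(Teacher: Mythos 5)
Your proposal is correct and follows essentially the same route as the paper's own proof: filter the sentences of Proposition~\ref{P.univ_vee} by whether their field-theoretic disjunct holds for all $k\in\mathcal{K},$ discarding those where it does and retaining only the group-theoretic disjunct of the others. Your explicit justification of the quantifier interchange (valid since $A_i(G)$ is constant in $k$) merely spells out what the paper leaves implicit.
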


\begin{proof}
To obtain the desired system of sentences~\eqref{d.univ_G,K},
one goes through the sentences~\eqref{d.univ_vee}
of Proposition~\ref{P.univ_vee}
and asks, for each, whether the second part,
$(\forall\ c_1,\dots,c_{n'}\in k)\ P''(c_1,\dots,c_{n'}),$
holds for {\em all} $k\in\mathcal{K}.$
If it does, we ignore that instance of~\eqref{d.univ_vee},
while if it does not, we include the other part,
$(\forall\ g_1,\dots,g_n\in G)\ P'(g_1,\dots,g_n),$
in our set of sentences~\eqref{d.univ_G,K}.
That the resulting set of sentences characterizes
the $\!\mathcal{K}\!$-resistant groups then follows
from the fact that the sentences~\eqref{d.univ_vee}
characterize pairs $(G,\,k)$ such that $G$ is $\!k\!$-resistant.
\end{proof}

Calling on some standard facts about classes of objects
characterized by universally quantified sentences, this gives us

\begin{corollary}\label{C.closure}
For every class $\mathcal{K}$ of fields \textup{(}including
the class of all fields\textup{)}, the class of
$\!\mathcal{K}\!$-resistant groups\\[.3em]
\textup{(a)} \ is closed under passing to subgroups,\\[.3em]
\textup{(b)} \ is closed under taking direct limits,\\[.3em]
\textup{(c)} \ is closed under taking inverse limits, and\\[.3em]
\textup{(d)} \ is closed under taking ultraproducts.
\end{corollary}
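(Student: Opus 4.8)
The plan is to read all four closure properties off the syntactic description supplied by Theorem~\ref{T.univ_G,K}: the $\mathcal{K}$-resistant groups are exactly the models of a set $\Sigma$ of sentences $(\forall\,g_1,\dots,g_n)\,P(g_1,\dots,g_n)$ in which each matrix $P$ is a single clause, i.e.\ a disjunction of finitely many equations $s_p=t_p$ and inequalities $u_q\neq v_q$ between group words. Thus it suffices to check that each such clausal universal sentence is preserved under the four constructions. Two of the four are immediate from standard model theory: a universal sentence passes to substructures, and a subgroup is a substructure, giving~(a); and by {\L}o\'s's theorem a first-order sentence true in every factor of an ultraproduct is true in the ultraproduct, giving~(d).

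The substance lies in (b) and (c), and I would exploit two features of the clausal form together with the fact that the structure maps of a limit or colimit of groups are homomorphisms, hence preserve every equation $s=t$ (though not, in general, an inequality). For~(c), let $G=\varprojlim_i G_i$ over a directed poset, with projections $\pi_i\colon G\to G_i$ that jointly separate points, and suppose toward a contradiction that some tuple $\bar g$ in $G$ makes $P(\bar g)$ fail, so that every $s_p(\bar g)\neq t_p(\bar g)$ and every $u_q(\bar g)=v_q(\bar g)$. Applying each homomorphism $\pi_i$ carries the equations $u_q=v_q$ into $G_i$, so there the negative literals of $P(\pi_i\bar g)$ all fail; since $G_i$ is $\mathcal{K}$-resistant the clause must still hold, so one of its finitely many positive literals $s_p=t_p$ holds at $i$. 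The set $J_p=\{\,i : s_p(\pi_i\bar g)=t_p(\pi_i\bar g)\ \text{in}\ G_i\,\}$ is downward closed, because a transition map $f_{ij}$ pushes an equation holding at $j$ down to one holding at $i$; these finitely many order ideals cover the directed poset, and I would invoke the elementary fact that finitely many \emph{proper} order ideals cannot cover a directed set (lift the chosen witnesses $i_p\notin J_p$ to a common upper bound $i^\ast$ and derive a contradiction). Hence some $J_p$ is all of the index set, so $s_p(\bar g)=t_p(\bar g)$ after every projection, and therefore in $G$ since the $\pi_i$ separate points, contradicting the failure of $P(\bar g)$.

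For the direct limit $G=\varinjlim_i G_i$ with coprojections $\mu_i$, I would run the dual argument, using that every element and relation of a filtered colimit of groups already occurs at a finite stage. Lift $\bar g$ to $\bar y$ at some stage $i$; if $P(\bar g)$ failed, then each equation $u_q(\bar g)=v_q(\bar g)$ would, by the usual description of filtered colimits of groups, already hold after pushing $\bar y$ forward to a common later stage $j$. At that stage the negative literals of $P$ all fail, so $\mathcal{K}$-resistance of $G_j$ forces a positive literal $s_p=t_p$ to hold there, and applying the homomorphism $\mu_j$ yields $s_p(\bar g)=t_p(\bar g)$ in $G$, contradicting the failure of $P(\bar g)$. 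I would note in passing that, although an inverse limit embeds in the product $\prod_i G_i$, one cannot simply reduce~(c) to closure under products and subgroups: the clauses are not Horn and the class is not evidently closed under arbitrary products, which is exactly why the direct argument above is needed.

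I expect the main obstacle to be the handling of the ``wrong-signed'' literals. Under the projections of an inverse limit the inequalities are the dangerous ones, since they need not persist, whereas under the coprojections of a direct limit the equations behave well but the inequalities may collapse. The clausal form is precisely what lets this be managed: in each case the homomorphisms reduce the problem to producing a single positive literal that holds uniformly, obtained for inverse limits from the order-ideal covering fact and for direct limits from the existence of a common stage. Verifying these two combinatorial points, and citing the standard descriptions of limits and colimits of groups on underlying sets, is the only genuine work; parts (a) and (d) are routine.
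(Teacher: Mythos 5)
Your proposal is correct, and its skeleton matches the paper's: all four parts are read off from the syntactic characterization of $\!\mathcal{K}\!$-resistance in Theorem~\ref{T.univ_G,K}, with (a) being preservation of universal sentences under substructures and (d) being {\L}o\'s's theorem. The difference lies in (b) and (c). The paper simply cites the standard model-theoretic preservation theorems for universal sentences under direct and inverse limits (Hodges, Chang--Keisler), and then remarks that (b) and (c) can alternatively be deduced from (a) and (d) by embedding a direct (respectively inverse) limit into an ultraproduct over an ultrafilter refining the filter generated by principal up-sets (respectively down-sets). You instead prove the two preservation facts from scratch, exploiting the clausal form of the sentences: for inverse limits, the stages at which a given positive literal holds form finitely many order ideals covering the directed index set, and finitely many proper order ideals cannot cover a directed set, so one literal holds at every stage and hence (the projections separating points) in the limit; for direct limits, finitely many equations holding in the colimit already hold at a common finite stage, where resistance of that stage forces a positive literal that then pushes forward. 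Both arguments are correct --- they are in essence the proofs behind the results the paper cites --- and they make the corollary self-contained; what the paper's route buys is brevity, plus the observation that (b) and (c) are formal consequences of (a) and (d). Your parenthetical point that (c) cannot be obtained cheaply from closure under products and subgroups is also well taken: closure of the resistant groups under direct products is precisely the paper's open Question~\ref{Q.P}.
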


\begin{proof}
(a)-(c) are instances of general results on classes of structures
defined by {\em universal sentences}:
that such classes satisfy~(a) (which is fairly obvious) is the
group-theoretic case of~\cite[Corollary 2.4.2, p.\,49]{WH};
(b)~appears as~\cite[Exercise 5.2.25, p.\,243]{C+K}
(it is only stated there for direct limits over the natural numbers,
but that restriction is not needed);
(c)~appears as~\cite[Theorem~2.4.6]{WH} and
\cite[Exercise 5.2.24, p.\,243]{C+K}.
Property~(d) holds for structures defined by arbitrary first-order
sentences (which can involve arbitrarily long
strings of alternating universal and existential quantifiers); this is
{\L}o{\'s}'s
Theorem \cite[Theorem~2.9,
and Theorem~2.16 (a)$\!\implies\!$(b)]{SB+HPS}.
\end{proof}

Remarks: \ Properties~(b) and~(c) can be deduced from~(a) and~(d):
Given a directed or inversely directed partially ordered set
$(I,\leq)$ and an $\!I\!$-indexed commutative diagram of
groups $(G_i)_{i\in I}$ and group homomorphisms $f_{ij}: G_i\to G_j$
$(i\leq j),$ we consider
the filter on $I$ generated by the principal up-sets, respectively
the principal down-sets.
Letting $\mathcal{U}$ be an ultrafilter on $I$ refining this
filter, we find that the ultraproduct $(\prod_I G_i)/\mathcal{U}$
contains a copy of $\varinjlim_I G_i,$
respectively $\varprojlim_I G_i.$
Hence if each $G_i$ is $\!\mathcal{K}\!$-resistant, that
ultraproduct will be so by~(d), and thus the direct or inverse
limit will be so by~(a).

Here is a nice consequence.

\begin{proposition}\label{P.cP_tf_ab}
If, for some class $\mathcal{K}$ of fields, the free group on two
generators is $\!\mathcal{K}\!$-resistant, then so
is the free product of any family of torsion-free abelian groups.
\end{proposition}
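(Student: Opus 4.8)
The plan is to build the free product out of copies of the two-generator free group using the closure properties of Corollary~\ref{C.closure}, the only substantive input being that free products of finitely generated torsion-free abelian groups are \emph{fully} residually free.

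First I would reduce to the finitely generated case. Write each torsion-free abelian factor $A_i$ as the directed union of its finitely generated subgroups; since a finitely generated torsion-free abelian group is isomorphic to some $\Z^{\,n}$, and since the subgroup of a free product $\ast_i A_i$ generated by a family of subgroups $B_i\leq A_i$ is their free product $\ast_i B_i$, the group $\ast_i A_i$ is the directed union, hence the direct limit, of its subgroups of the form $G=\Z^{\,n_1}\ast\cdots\ast\Z^{\,n_r}$. By Corollary~\ref{C.closure}(b) it therefore suffices to show that each such $G$ is $\mathcal{K}$-resistant.

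Next I would check that $G$ is fully residually free: for every finite set $X$ of nonidentity elements of $G$ there is a single homomorphism from $G$ to a free group that is nontrivial on all of $X$ at once. Each factor $\Z^{\,n}$ has the analogous property for maps into $\Z$, since finitely many nonzero vectors can be kept off the kernel by one linear functional $\Z^{\,n}\to\Z$ (a generic functional avoids the finitely many hyperplanes on which the given vectors would be killed); and it is a classical fact (B.~Baumslag) that the class of fully residually free groups is closed under free products. With this in hand, the ultraproduct machinery finishes the argument. Index by the directed set $I$ of finite subsets of $G-\{1\}$, and for each $X\in I$ pick a homomorphism $\phi_X\colon G\to F_2$ nontrivial on $X$ (its image is a finitely generated free group, so embeds in $F_2$). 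For any ultrafilter $\mathcal{U}$ on $I$ refining the up-sets, the induced map $G\to F_2^{\,I}/\mathcal{U}$ is injective, because for a fixed $g\neq 1$ the set of $X$ on which $\phi_X$ is nontrivial contains every $X\ni g$, a $\mathcal{U}$-large set. Since $F_2$ is $\mathcal{K}$-resistant by hypothesis, the ultrapower $F_2^{\,I}/\mathcal{U}$ is $\mathcal{K}$-resistant by Corollary~\ref{C.closure}(d), and hence so is its subgroup $G$ by Corollary~\ref{C.closure}(a). Combined with the reduction above, this proves the proposition.

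I expect the main obstacle to be exactly the full-residual-freeness step. The reduction and the ultrafilter construction are formal, but the argument genuinely needs one homomorphism to a free group that separates an entire prescribed finite set simultaneously; plain residual freeness would not suffice, since then the separating map varies with the point and no single ultrafilter is $\mathcal{U}$-large for all $g$ at once. I would also remark that free groups, rather than nilpotent ones, are essential here: by Corollary~\ref{C.solvable} nonabelian nilpotent (indeed solvable) groups fail to be resistant, so one cannot replace the approximating free groups by free nilpotent quotients.
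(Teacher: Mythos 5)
Your proof is correct, and its outer frame coincides with the paper's: both reduce via Corollary~\ref{C.closure}(b) to the finitely generated groups $G=\Z^{n_1}\ast\cdots\ast\Z^{n_r},$ embed these in an ultrapower of the free group on two generators, and conclude with Corollary~\ref{C.closure}(d) and~(a). Where you genuinely differ is in the key lemma producing the embedding. The paper never mentions full residual freeness: it fixes one nonprincipal ultrapower over a countable index set, proves by a normal-form argument that the ultrapower of the free group on $n$ generators contains the free product of $n$ copies of the ultrapower of the infinite cyclic group, and observes that the latter group, being uncountable and torsion-free abelian, contains free abelian subgroups of all finite ranks; thus all the groups $G$ above sit inside a single canonical ultrapower. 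You instead establish that $G$ is fully residually free and then run the compactness-style embedding over the directed set of finite subsets of $G-\{1\},$ with an ultrafilter tailored to $G.$ The two routes rest on the same underlying facts: the closure of full residual freeness under free products is proved by exactly the syllable/normal-form argument that the paper applies inside its ultrapower, and your generic-functional argument for $\Z^{n}$ plays the role of the paper's observation that the ultrapower of $Z$ has infinite rank. What your packaging buys is a clean, reusable criterion -- any fully residually free group is $\mathcal{K}\!$-resistant once the two-generator free group is -- which also covers, uniformly, the centralizer-extension examples such as $\langl x,y,z\mid [z,w]=1\rangl$ that the paper treats after the proposition by ad hoc ultrapower arguments (those groups being classical examples of fully residually free groups). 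What the paper's packaging buys is self-containedness: it needs no appeal to B.~Baumslag, and one explicit ultrapower serves all the groups at once. One point you should tighten: the free-product closure of full residual freeness is the step where the real content lives, so either cite it precisely or include its short proof -- choose homomorphisms of the free factors to free groups keeping nontrivial all syllables occurring in the normal forms of the finitely many given elements; the induced map to the free product of those free groups then carries each given element to a word still in normal form, hence nontrivial.
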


\begin{proof}[Sketch of proof]
We shall see that $\!\mathcal{K}\!$-resistance of each
of the groups or sorts of groups to be listed as~(i)-(vii) below
implies that of the next.
When we refer in our arguments to ``(a)'', ``(b)'', etc.,
these are the four cases of Corollary~\ref{C.closure} above.

Here are the seven (sorts of) groups.
In~(ii)-(vi), $n$ denotes a fixed positive integer, assumed the same
in successive cases; but $\!\mathcal{K}\!$-resistance for
case~(i) will imply the same for case~(ii) for all $n,$
and $\!\mathcal{K}\!$-resistance of~(vi) for all $n$ will be used in
deducing $\!\mathcal{K}\!$-resistance of~(vii).\\[.2em]
\hspace*{2em}(i)~The free group on two generators,\\
\hspace*{2em}(ii)~the free group on $n$ generators,\\
\hspace*{2em}(iii)~a nonprincipal ultrapower over a countable index set
of the free group on $n$ generators,\\
\hspace*{2em}(iv)~the free product of $n$ copies of a nonprincipal
ultrapower of the infinite cyclic group over a
countable index set,\\
\hspace*{2em}(v)~the free product of every family
of $n$ free abelian groups of finite ranks,\\
\hspace*{2em}(vi)~the free product of every family
of $n$ torsion-free abelian groups,\\
\hspace*{2em}(vii)~the free product of every family
(finite or infinite) of torsion-free abelian groups.\vspace{.2em}

Under our assumption that~(i) is $\!\mathcal{K}\!$-resistant,
the fact that the free group on two generators contains
subgroups free on any finite number $n$ of generators
implies the $\!\mathcal{K}\!$-resistance of~(ii), by~(a).
By~(d),~(ii) gives us~(iii).
Using the fact that the free group on $n$ generators is
the free product of $n$ copies of the infinite cyclic group,
and using the normal form for free products of groups, it is
not hard to show that an ultrapower of the free group on $n$ generators
contains the free product of $n$ copies of the corresponding ultrapower
of the infinite cyclic group, giving~(iv).
Now a nonprincipal countable ultrapower of the infinite
cyclic group $Z$ is uncountable,
but is, like $Z,$ torsion-free abelian,
hence it must contain infinitely many linearly independent elements,
hence in particular, it must contain subgroups that are free abelian
of all finite ranks; so the free product of $n$ copies of that group
will contain subgroups as in~(v).
Note next
that every finitely generated subgroup of a torsion-free abelian
group is free abelian of finite rank, hence every torsion-free
abelian group is a direct limit of free abelian groups of finite rank;
hence the corresponding statement holds for free products of $n$ such
groups, so by~(b), (v) gives~(vi).
Similarly, the free product of a general family of
torsion-free abelian groups is a direct limit of
free products of finite families of such groups,
so another  application of~(b) gives~(vii).
\end{proof}

For the group theorist who finds the use of ultrapowers
a bit esoteric, let me indicate, very roughly,
in a different way, what is behind Proposition~\ref{P.cP_tf_ab}.
Suppose we know the free
group $F=\langl x, y\rangl$ is $\!k\!$-resistant, and
want to show the same for, say, $G= \langl x, y,y'\mid [y,y']=1\rangl,$
i.e., the free product of the free abelian group on one generator $x,$
and the free abelian group on two generators $y,\,y'.$
So let $r(x,y,y')$ be a non-monomial element of $k\,G,$ and
consider the group homomorphism $G\to F$ carrying $x$ to $x,$
$y$ to $y,$ and $y'$ to $y^N$ for some large integer $N.$
The key fact (which I leave it to the reader to convince himself
or herself of) is that for $N$ large enough, the image $r(x,y,y^N)$
will also be non-monomial -- in fact, that the distinct terms in the
support of $r(x,y,y')$ will have distinct images in $F.$
Hence by our assumption on $F,$ $r(x,y,y^N)$ generates a proper ideal
of $k\,F;$ hence $r(x,y,y'),$ which maps to it, must
generate a proper ideal of $k\,G.$
The ultrapower argument gets the same conclusion by
showing that the above $G$ is a subgroup of an ultrapower of $F.$

In an appendix,~\S\ref{S.app:_lims} below, we record
some further results on the class of groups that can be obtained
as direct or inverse limits of free products of finite families
of free abelian groups of finite ranks.

Let us ask about a possible result stronger
than that asked for in Question~\ref{Q.free}.

\begin{question}\label{Q.cP}
Is the class of resistant groups closed under taking free products of
groups?
\end{question}

A question which looks more elementary,
but which I also don't know how to approach, is

\begin{question}\label{Q.P}
Is the class of resistant groups closed under taking direct products
of groups?
\end{question}

If we replace ``resistant'' with ``$\!\mathcal{K}\!$-resistant''
for an arbitrary class $\mathcal{K}$ of fields, both questions
have negative answers, as shown by \eqref{d.invertible},
which tells us that
the groups $Z_2$ and $Z_3$ are $\!\mathcal{K}\!$-resistant
for $\mathcal{K}=\{\Z/2\Z\},$
but that this is not true of any groups properly containing them,
though such groups can be formed by taking direct products
or free products of these groups with themselves, with each other, or
with other $\!\mathcal{K}\!$-resistant groups, such as $Z.$
I know of no counterexamples other than these
(up to isomorphism of groups and fields); but these cases
suggest that if Questions~\ref{Q.cP} and~\ref{Q.P} have
positive answers, these are not likely to have trivial proofs.

But here is an easy positive answer to
a special case of Question~\ref{Q.P}.

\begin{proposition}\label{P.GxA}
If $G$ is a resistant group and $A$ a torsion-free abelian
group, then $G\times A$ is again resistant.

More generally, this is true with ``resistant'' replaced by
``$\!\mathcal{K}\!$-resistant'' for any class
$\mathcal{K}$ of fields closed under passing to extension fields.
\end{proposition}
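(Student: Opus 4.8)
The plan is to realize $k\,(G\times A)$ inside a group algebra of $G$ over a larger field and to transfer resistance through that embedding. It suffices to prove the $\!\mathcal{K}\!$-resistant version, since the unqualified statement is the case where $\mathcal{K}$ is the class of all fields, which is trivially closed under extension fields. So fix $k\in\mathcal{K}$ and a non-monomial $r\in k\,(G\times A)$; the goal is to show that the ideal $(r)$ generated by $r$ is proper. First I would use the canonical isomorphism $k\,(G\times A)\cong k\,G\otimes_k k\,A$, identifying $G$ and $A$ with the commuting subgroups $G\times\{1\}$ and $\{1\}\times A$. Because $A$ is torsion-free abelian, $k\,A$ is (by a standard fact) a commutative integral domain; let $K=\operatorname{Frac}(k\,A)$, an extension field of $k$, so that $K\in\mathcal{K}$ by the closure hypothesis. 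Tensoring the inclusion $k\,A\hookrightarrow K$ with the free (hence flat) $\!k\!$-module $k\,G$ yields an injective $\!k\!$-algebra homomorphism
\[
\iota:\ k\,(G\times A)=k\,G\otimes_k k\,A\ \hookrightarrow\ k\,G\otimes_k K=K\,G,
\]
under which each $g\in G$ is fixed while each $a\in A$ becomes the scalar $a\in k\,A\subseteq K$.

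Next I would compute the support of $\iota(r)$. Writing $r=\sum_{(g,a)}c_{g,a}\,(g,a)$ and collecting terms by their $\!G\!$-coordinate gives $\iota(r)=\sum_g d_g\,g$ with $d_g=\sum_a c_{g,a}\,a\in k\,A\subseteq K$; since distinct elements of $A$ are $\!k\!$-linearly independent in $k\,A$, we have $d_g\neq 0$ precisely when $g$ lies in the image of $\supp(r)$ under projection to $G$. Hence $\supp(\iota(r))$ is exactly that projection, and the argument divides according to its cardinality.

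In the nondegenerate case, where the projection has cardinality $>1$, the element $\iota(r)$ is non-monomial in $K\,G$. Since $G$ is $\!K\!$-resistant, the ideal it generates in $K\,G$ is proper. As $\iota$ is an injective ring homomorphism, it carries $(r)$ into the ideal generated by $\iota(r)$; thus $1\in(r)$ would give $1=\iota(1)\in(\iota(r))$, a contradiction, so $(r)$ is proper. In the degenerate case, $\supp(r)$ lies in a single coset $\{g_0\}\times A$, so $r=g_0\,s$ for some non-monomial $s\in k\,A$. As $g_0$ is a unit and $s$ is central, $(r)=(s)=s\cdot k\,(G\times A)=k\,G\otimes_k(s\cdot k\,A)$. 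This ideal is proper because $A$, being torsion-free abelian, is $\!k\!$-resistant, so $k\,A/(s)\neq 0$, and tensoring this nonzero module over $k$ with the free module $k\,G$ remains nonzero.

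I expect the degenerate case to be the only real obstacle: there $\iota(r)$ collapses to a monomial, so $K\,G$ supplies no information (every monomial is a unit there), and one must instead use that the offending element is a central, non-monomial member of $k\,A$ and appeal to the resistance of $A$ itself. Everything else is routine — the domain property of $k\,A$, flatness of the localization $k\,A\hookrightarrow K$, and the fact that an injective ring homomorphism reflects properness of the principal ideal $(r)$.
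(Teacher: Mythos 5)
Your proof is correct and follows essentially the same route as the paper's: the same case split according to whether $\supp(r)$ projects to more than one element of $G$, and the same embedding $k\,(G\times A)\cong k\,G\otimes_k k\,A\hookrightarrow K\,G$ with $K=\operatorname{Frac}(k\,A)\in\mathcal{K}$ in the nondegenerate case. In the degenerate case the paper kills the non-monomial factor $s\in k\,A$ by mapping onto $k''\,G$ with $k''=(k\,A)/M$ for a maximal ideal $M\ni s$, whereas you quotient by the central element $s$ directly and invoke freeness of $k\,G$ over $k$; both versions hinge on the same underlying fact, that a non-monomial element of $k\,A$ is a non-unit (i.e., the resistance of torsion-free abelian groups), so the difference is cosmetic.
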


\begin{proof}
We will prove the general statement.
Our proof will use the observation that
$k (G\times A)\cong k\,G\otimes_k k\,A.$

Let $r$ be a non-monomial element of $k (G\times A),$
where $G$ and $A$ are as in the hypothesis, and $k\in\mathcal{K}.$

Suppose first that not all elements of $\r{supp}(r)\subseteq G\times A$
have the same $\!G\!$-component.
Then letting $k'$ be the field of fractions of
the commutative integral domain $k\,A,$ we
see that the natural embedding
\begin{equation}\begin{minipage}[c]{35pc}\label{d.to_k'}
$k (G\times A)\ \cong\ k\,G\otimes_k k\,A
\ \to\ k\,G\otimes_k k'\ \cong\ k'\,G$
\end{minipage}\end{equation}
carries $r$ to a non-monomial element of this $\!k'\!$-algebra.
Since $k'\in\mathcal{K}$ and $G$ is
$\!\mathcal{K}\!$-resistant, that element generates a proper
ideal, whose inverse image in $k(G\times A)$ will be
a proper ideal containing~$r.$

On the other hand, if all elements of $\r{supp}(r)$
have the same $\!G\!$-component $g\in G,$
we can write $r = a\,g$ where $a$ is a non-monomial element of $k\,A.$
Thus, $a$ is non-invertible in the commutative
ring $k\,A,$ and so is contained in a proper maximal ideal $M.$
Letting $k''$ denote the field $(k\,A)/M,$ we see that the natural map
\begin{equation}\begin{minipage}[c]{35pc}\label{d.k''}
$k (G\times A)\ \cong\ k\,G\otimes_k k\,A
\ \to\ k\,G\otimes_k k''\ \cong\ k''\,G$
\end{minipage}\end{equation}
carries $r$ to zero, so the kernel of~\eqref{d.k''}
is a proper ideal containing $r.$
\end{proof}

Returning to Proposition~\ref{P.cP_tf_ab}, the method used to prove
that result can be used to show that still more groups are
$\!\mathcal{K}\!$-resistant under the same assumption.
For instance, let $w$ be any nonidentity element of the
free group on two generators, $\langl x,y\rangl.$
Suppose we take a nonprincipal ultrapower of $\langl x,y\rangl,$
and within it, let $G$ be the subgroup generated by the canonical
image of $\langl x,y\rangl,$ together with an element $z$
of the ultrapower of the free abelian subgroup
$\langl w\rangl,$ such that, in that ultrapower,
$z$ and $w$ are linearly independent.
The centralizer of $z$ in $\langl x,y\rangl$
will be the same as the centralizer of $w\in \langl x,y\rangl;$
and if $w$ is not a proper power therein,
that centralizer will be precisely $\langl w\rangl.$
The group $G$ will then have the
structure $\langl x,y,z\mid [z,w]=1\rangl.$
If $w$ happens to belong to some free generating set $\{v,w\}$
of the free group $\langl x,y\rangl,$ then $G$ is
just the free product of the free abelian groups
on $\{v\}$ and on $\{w,z\},$ and so will have the
form described in Proposition~\ref{P.cP_tf_ab};
but if $w,$ in addition to not being a proper power, does
not belong to such a free generating set
(e.g., if $w= x^m y^n$ with $m,n>1,$ or if
$w= [x,y]),$ then $G$ will lie outside the class of groups
named in that proposition, though it will be resistant by
the reasoning used there.
Another group which one can show resistant in the same way is
$\langl x,\,y,\,z_1,\,z_2,\,z_3 \mid 1=[x,z_1]=[y,z_2]=[xy,z_3]\rangl.$

If we knew that Questions~\ref{Q.cP} and~\ref{Q.P} had positive answers,
then starting with the infinite cyclic group and iterating the
operations of product and free product, we could conclude that a large
class of groups presented by sets of generators together with
commutativity relations among {\em some} pairs of these generators
were resistant.
But this would not cover all groups having presentations of that
description, so let us ask

\begin{question}\label{Q.partial_cm}
Is every {\em right-angled Artin group} -- that is, every group
having, for some set $I$ and some set $J$ of $\!2\!$-element subsets
of $I,$ the presentation
\begin{equation}\begin{minipage}[c]{35pc}\label{d.partial_cm}
$G_{I,J}\ =
\ \langl x_i\ (i\in I)\mid x_i\,x_j=x_j\,x_i$
for all $\{i,j\}\in J\rangl$
\end{minipage}\end{equation}
resistant?
\end{question}

Examples of groups~\eqref{d.partial_cm} that cannot
be gotten by iterating the constructions
of Questions~\ref{Q.cP} and~\ref{Q.P} are those with
$I=\{1,\dots,n\}$ for any $n\geq 4$
and $J=\{\{i,i+1\}\mid 1\leq i<n\},$
and the cyclically indexed analogs of these, with
$I=Z_n$ for any $n\geq 5,$ and $J=\{\{i,i+1\}\mid i\in I\}.$
See~\cite{RAAG} for a general survey of the subject of right-angled
Artin groups.
A positive answer to Question~\ref{Q.partial_cm} would lead,
via the application of ultraproducts and subgroups,
to still further classes of resistant groups.

We remark that right-angled Artin groups are a special case of
{\em Artin-Tits groups}~\cite{A-T}, these being groups
with presentations in which one again starts with a
generating set $\{x_i\mid i\in I\}$ and
a set $J$ of $\!2\!$-element subsets of $I,$ but then imposes
for each $\{i,j\}\in J$ a relation of one of the
forms $(x_i x_j)^{d_{ij}}=(x_j x_i )^{d_{ij}}$ or
$(x_i x_j)^{d_{ij}} x_i =(x_j x_i )^{d_{ij}} x_j,$ with
$d_{ij}\geq 1$ in each case.

However, among such groups $G,$ {\em only} the
right-angled Artin groups, i.e., those whose relations
are all of the first form and have $d_{ij}=1,$ can be resistant.
For if an Artin-Tits group has among its defining relations
a relation $(x_i x_j)^{d_{ij}}=(x_j x_i )^{d_{ij}}$
with $d_{ij}>1,$ then Corollary~\ref{C.g-c_egs}(c) applies
with $h=x_i x_j,$ $h'=x_j x_i,$
while if it has a defining relation
$(x_i x_j)^{d_{ij}} x_i =(x_j x_i )^{d_{ij}} x_j,$
then taking $g=x_i x_j^{-1},$ we see that in $G/[g,G],$
$\overline{x_j}$ commutes with
$\overline{g}=\overline{x_i}\,\overline{x_j}^{-1},$ hence with
$\overline{x_i},$ hence the given defining relation
reduces to $\overline{x_i}=\overline{x_j},$
which says $\overline{g}=1;$ hence Theorem~\ref{T.g-c_proper}(i)
fails, hence so does Theorem~\ref{T.g-c_proper}(iv).
(The above arguments used implicitly the facts that in the former
situation, the relations defining $G$ do not imply $x_i x_j=x_j x_i,$
and that in the latter, they do not imply $x_i=x_j.$
These can be deduced from the result
of~\cite{Artin_monoid} that given a family of relations
as in the definition of an Artin-Tits group, the {\em monoid} that
those relations define embeds in the group that they define,
together with the observation that since each such defining
relation involves the same set of generators on both sides,
any monoid relation $a=b$ that they imply will also
involve the same set of generators on both sides,
and be a consequence of the subset of our defining
relations that involve no generators not occurring in $a$ and $b.)$

\section{A possible Freiheitssatz for group algebras}\label{S.Freiheitssatz}

A standard group-theoretic result, the Freiheitssatz
\cite{Freiheitssatz}, says roughly (we will be
more precise soon) that if one divides the
free group $F$ on generators $(x_i)_{i\in I}$ by the normal subgroup
$N$ generated by a single relator $w,$ and if $x_{i_0}$ is one
of the generators involved in $w,$ then the subgroup of $F/N$
generated by the images of all the other
generators $x_i$ $(i\in I-\{i_0\})$ is free on those generators.
For instance, in the group
$\langl x,y,z\mid x^{-2} y^{-3} x^2 y^3=1\rangl,$
since the relation involves $x,$
the subgroup generated by $y$ and $z$ is free on those generators,
and similarly, since the relation involves $y,$ the elements
$x$ and $z$ generate a free subgroup.
On the other hand, since
$z$ is not involved in our relator $x^{-2} y^{-3} x^2 y^3,$
we cannot say the same about the subgroup generated by $x$ and $y;$
clearly it is not free on $x$ and $y.$

However, one has to be careful about what sort of relator one uses.
The group described above could also be written
$\langl x,y,z\mid z\,(x^{-2} y^{-3} x^2 y^3)\,z^{-1}=1\rangl,$
where the relator now formally involves $z,$ but
the subgroup generated by $x$ and $y$ is clearly still not free.
The Freiheitssatz
excludes such cases by requiring ``cyclically reduced'' relators:
words $w$ in the generators such that not only do symbols $x_i$
and $x_i^{-1}$ never appear adjacent within $w,$ but such
that they also do not
occur, one as the first and the other as the last term of $w.$
Clearly, any relation is equivalent to one given
by a cyclically reduced relator.

Might some sort of Freiheitssatz hold regarding one-relator
factor algebras of {\em group algebras} of free groups?

Suppose $F$ is the free group on generators $(x_i)_{i\in I},$ and
$r\in k\,F$ is a relator we wish to divide out by.
What are the obstacles to hoping that
for any $x_{i_0}$ occurring in $r,$
the subalgebra of our factor algebra generated by
the other $x_i$ is isomorphic to the group algebra on the free group
on those generators?
In this case, there are several.
Note that in the free group on generators $x,y,z,$
the ideal generated by $x^2 - y^3$ is also generated by $z x^2 - z y^3$
and likewise by $x^2 z - y^3 z;$ so we should
forbid any generator-symbol (or inverse of a generator-symbol)
from appearing simultaneously as the first letter of all members
of the support of
our relator, or as the last letter of all these elements.
This condition automatically excludes the
result of conjugating a relator $r$ by a generator
$x_i$ which $r$ does not involve -- unless
the support of $r$ contains the element $1.$
Bringing in that further case, let us make

\begin{definition}\label{D.str_red}
For $F$ the free group on generators $(x_i)_{i\in I},$ and $k$ a field,
we shall call an element $r\in k\,F$ {\em strongly reduced} if, when
the elements of $\r{supp}(r)$ are written in normal form,\\
{\rm (a)} there is no symbol $x_i^{\pm 1}$ with which all
elements of $\r{supp}(r)$ begin,\\
{\rm (b)} there is no symbol $x_i^{\pm 1}$ with which all
elements of $\r{supp}(r)$ end, and\\
{\rm (c)} if $1\in\r{supp}(r)$ \textup{(}so that~{\rm (a)}
and~{\rm (b)} hold trivially\textup{)},
there is no symbol $x_i^{\pm 1}$ such that all elements
of $\r{supp}(r)-\{1\}$ both begin with
$x_i^{\pm 1}$ and end with the inverse symbol, $x_i^{\mp 1}.$
\end{definition}

For any $r\in k\,F,$ let us say that a generator $x_{i_0}$ is
``involved in'' $r$ if $x_{i_0}$ or $x_{i_0}^{-1}$ occurs
anywhere in the normal form of any of the elements of $\r{supp}(r).$
We can now pose

\begin{question}\label{Q.Freiheitssatz}
Let $F$ be the free group on a set $(x_i)_{i\in I},$ $k$ a field,
$r$ a {\em strongly reduced} element of $k\,F,$
and~$(r)\subseteq k\,F$ the ideal it generates.
Then must the following equivalent conditions hold?\\[.2em]
\textup{(i)} \ For every $x_{i_0}$ involved in $r,$
the subalgebra of $k\,F/(r)$ generated by the
images of the $x_i^{\pm 1}$ $(i\in I-\{i_0\})$
is \textup{(}up to natural
isomorphism\textup{)} the group algebra over $k$ of the free
group on $\{x_i\mid i\in I-\{i_0\}\}.$\\[.2em]
\textup{(ii)} \ For every $x_{i_0}$ involved in $r,$
the ideal $(r)$ has zero intersection with the subalgebra
of $k\,F$ generated by $\{x_i^{\pm 1}\mid i \in I-\{i_0\}\}.$\\[.2em]
\textup{(iii)} \ Every nonzero element $s\in (r)$ involves \textup{(}at
least\textup{)} all generators $x_i$ which $r$ involves.
\end{question}

We see from formulation~(ii) above that
a positive answer to this question would imply
a positive answer to Question~\ref{Q.free}.
Formulation~(iii) seems to give the best
insight as to what would be needed to come
up with a proof or a counterexample.

Some further observations:  If we could
prove the suggested Freiheitssatz for elements whose supports contain
$1$ as in condition (c), then the whole statement would follow.
Indeed, given an $r'$ which, rather,
satisfies~(a) and~(b), and whose support does not contain $1,$
let $u$ be an element of minimal length in that support,
and let $v$ and $w$ be (not necessarily distinct)
elements of that support such that the first and last terms in their
normal form expressions differ respectively from the
corresponding terms in the expression for $u.$
The element $r = u^{-1}\,r'$ certainly generates the same
ideal as does $r'$ and has $1$ in its support.
The element $u^{-1}v$ of its support has normal form beginning with
the normal form of $u^{-1},$ hence involves all the
generators that $u$ involves, from which it is not hard
to deduce that $r$ involves exactly the same set of generators as $r'.$
Finally, the initial factor in the normal form
of $u^{-1}v$ is the inverse of the
final factor in that of $u,$ while the final factor
of $u^{-1}w$ is, by assumption, {\em not} the final factor of $u,$
from which it can be seen that $r'$ satisfies~(c); so if
the Freiheitssatz holds for elements as in~(c), the result
for elements as in~(a) and~(b) whose support does not contain~$1$
also holds.

(I claimed in an early preprint of this note that, likewise, if one
knew the result for all elements $r$ satisfying~(a) and~(b)
and not having $1$ in their supports, one could prove
it for elements $r$ as in~(c); but looking more closely, I do not
see how to prove this.
A situation where there is in general no hope of doing this by
applying~(a) and~(b) to a $\!2\!$-sided associate of $r$ is when
$r$ involves only one of our generators; but in that case,
it is not hard to show directly that $r$ satisfies~(i) above.
For two examples where one can indeed get from an $r$ as in~(c)
with $1$ in its support
an element $r'=u\,r\,v$ $(u,v\in F)$ as in~(a) and~(b) not having $1$
in its support -- but not in any
evident systematic way -- consider on the one hand $r=1+x+y+xy,$
and on the other the very similar expression, $r=1+x+y+xy+yx.$
Then $r'=y^{-1} r\,x^{-1}$ works for the former but not the latter;
while $x^{-1} r\,x^{-1}$ works for the latter but not the former.
(Does any such expression work for both?
Yes, it happens that $y^{-1} x^{-1} r\,x^{-1}$ does.)
But I don't see a way to find such $u$ and $v$ for general~$r.)$

Note that a nonzero member of $(r)$ can have
far fewer elements in its support than $r$ itself does.
For a familiar case, take any $g\in F-\{1\}$
let $r = 1+g+\dots+g^{n-1},$ for some large $n,$
and note that $(1-g)r = 1 - g^n$ has only two terms.
For another sort of example, let $x$ and $y$ be two members of the
given free generating set of $F$ and let $r$ have the
form $p(x)+y,$ where $p$ is any polynomial (or more generally, any
Laurent polynomial) in one variable.
Then $x\,r-r\,x = x\,y - y\,x,$ again with only two terms.
Nevertheless, in each of these cases the set of free
{\em generators} involved in the indicated element of $(r)$ still
includes all those involved in $r;$
so they do not give counterexamples to~(iii) above.

Analogs of Questions~\ref{Q.Freiheitssatz} and~\ref{Q.free} for
{\em free associative algebras} (monoid algebras of free monoids) were
posed by P.\,M.\,Cohn in \cite[Conjectures~1 and~3, pp.\,121-122]{PMC}.
Since the free monoid has no invertible elements other than~$1,$
the desired Freiheitssatz did not involve any conditions analogous to
the ``cyclically reduced'' assumption of
the group-theoretic Freiheitssatz, or the ``strongly reduced''
assumption used above.
L.\,G.\,Makar-Limanov~\cite[\S5]{LM-L} subsequently proved
such a Freiheitssatz for $k$ of characteristic~0, and thus
a positive answer to the analog to
Question~~\ref{Q.free} for free associative algebras over such $k.$

\pagebreak
\section{Appendix: direct and inverse limits of free products of free abelian\\ groups of finite rank}\label{S.app:_lims}

We saw in Proposition~\ref{P.cP_tf_ab}
that if the free group on two generators is resistant, then
so is the free product of any family of torsion-free abelian groups.
The resistance, under the above
assumption, of free products of finite families of free abelian
groups of finite ranks was a key step in this deduction, and a key
tool was Corollary~\ref{C.closure}, saying that the class
of resistant groups is closed under subgroups,
direct and inverse limits, and ultraproducts.
We shall show below that for
\begin{equation}\begin{minipage}[c]{35pc}\label{d.calG}
$\mathcal{G}\ =$ the class of all free products
of finite families of free abelian groups of finite ranks,
\end{minipage}\end{equation}
what we can get from $\mathcal{G}$ by direct limits alone
are precisely the groups all of whose finitely
generated subgroups lie in $\mathcal{G},$ while
what we can get by inverse limits alone form (curiously)
a proper subclass of those direct limits.

We begin with a general observation, which we will subsequently apply
to the $\mathcal{G}$ of~\eqref{d.calG}.

\begin{proposition}\label{P.class_G}
Let $\mathcal{G}$ be any class of finitely generated groups which is
closed under isomorphisms, and under passing to finitely generated
subgroups \textup{(}not necessarily
the class described in~\eqref{d.calG}\textup{)}.\\[.3em]
\textup{(a)}\ \ Suppose further that every chain of surjective
non-one-to-one homomorphisms $G_1\to G_2\to\dots$ among members of
$\mathcal{G}$ is finite.
Then the direct limits of directed systems of groups in $\mathcal{G}$
are precisely the groups whose finitely
generated subgroups all belong to $\mathcal{G}.$\\[.3em]
\textup{(b)}\ \ Suppose, rather, that every ``reverse
chain'' of surjective non-one-to-one
homomorphisms, $\dots\to G_{-2}\to G_{-1},$ among members of
$\mathcal{G}$ that admits a common finite bound on the number of
generators required by the $G_i$ is finite.
Then every inverse limit of groups in $\mathcal{G}$
has the property that all its finitely generated subgroups
belong to $\mathcal{G}.$
\textup{(}Hence, such inverse limits are also
direct limits of groups in $\mathcal{G}.)$
\end{proposition}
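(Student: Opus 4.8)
The plan is to prove both halves by a single mechanism: translate the stated chain conditions on homomorphisms into chain conditions on a suitable directed family of normal subgroups, and then use directedness to promote a maximal (resp.\ minimal) member of that family to a largest (resp.\ smallest) one. One implication of~(a) needs no chain hypothesis at all: a group $G$ whose finitely generated subgroups all lie in $\mathcal{G}$ is the direct limit, over inclusions, of those subgroups, and hence a direct limit of members of $\mathcal{G}$.

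For the reverse implication of~(a), I would take $G=\varinjlim_i G_i$ with all $G_i\in\mathcal{G}$ and fix a finitely generated $H=\langle h_1,\dots,h_m\rangle\subseteq G.$ Using directedness, lift the $h_\ell$ to a single $G_i,$ obtaining $K=\langle\tilde h_1,\dots,\tilde h_m\rangle\subseteq G_i$ with $K\in\mathcal{G}$ and $\phi_i(K)=H,$ where $\phi_i\colon G_i\to G$ is the canonical map. Pushing $K$ forward along the transition maps yields finitely generated $K_j\subseteq G_j$ $(j\geq i),$ all in $\mathcal{G},$ with surjective transition maps and $\phi_j(K_j)=H$ throughout; one checks that $H\cong\varinjlim_{j\geq i}K_j.$ Writing $K_j=K/N_j,$ the kernels $N_j$ form an upward-directed family of normal subgroups of $K$ with $\bigcup_j N_j=\ker(\phi_i|_K),$ so $H\cong K/\bigcup_j N_j.$ The hypothesis of~(a) now enters: any strictly ascending sequence $N_{j_1}\subsetneq N_{j_2}\subsetneq\cdots$ would give an infinite chain $K/N_{j_1}\to K/N_{j_2}\to\cdots$ of surjective, non-injective homomorphisms among members of $\mathcal{G},$ which is excluded. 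Hence $\{N_j\}$ has a maximal and, being directed, a largest member $N_{j_0};$ so $\bigcup_j N_j=N_{j_0}$ and $H\cong K_{j_0}\in\mathcal{G}.$

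Part~(b) runs dually. Given $G=\varprojlim_i G_i$ and a finitely generated $H=\langle h_1,\dots,h_m\rangle\subseteq G,$ I would set $L_i=\ker(p_i|_H),$ where $p_i\colon G\to G_i$ is the projection. Each $H/L_i\cong p_i(H)$ is a finitely generated subgroup of $G_i,$ hence lies in $\mathcal{G}$ and is generated by at most $m$ elements, so this family of quotients meets the common-generator bound demanded by the hypothesis. The assignment $i\mapsto L_i$ is order-reversing, so $\{L_i\}$ is downward-directed, and $\bigcap_i L_i=\{1\}$ because $H$ embeds in $\varprojlim_i G_i.$ A strictly descending sequence $L_{i_1}\supsetneq L_{i_2}\supsetneq\cdots$ would produce an infinite reverse chain $\cdots\to H/L_{i_2}\to H/L_{i_1}$ of surjective, non-injective, boundedly generated homomorphisms among members of $\mathcal{G},$ contrary to hypothesis; so $\{L_i\}$ has a minimal and, by directedness, a smallest member $L_{i_0}.$ Then $L_{i_0}\subseteq\bigcap_i L_i=\{1\},$ giving $H\cong p_{i_0}(H)\in\mathcal{G}.$ The final parenthetical assertion is then immediate from the trivial implication of~(a).

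I expect the only real subtlety to lie in the limit bookkeeping---verifying $H\cong\varinjlim_{j\geq i}K_j$ in~(a) and $\bigcap_i L_i=\{1\}$ in~(b)---together with the observation that the stated chain conditions are phrased for \emph{abstract} chains of homomorphisms, so the ascending (resp.\ descending) chains of kernels feed into them directly, with no need for the indices $j_\ell$ (resp.\ $i_\ell$) to be comparable in the original index set. Once one sees that directedness converts a maximal (resp.\ minimal) member of the kernel family into a largest (resp.\ smallest) one, both parts follow; and the bounded-generation hypothesis of~(b) is automatic, since every group involved is a quotient of the fixed finitely generated $H.$
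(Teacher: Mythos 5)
Your proof is correct and follows essentially the same route as the paper's: the easy direction via the directed union of finitely generated subgroups, and the substantive directions by lifting generators and pushing them forward (for direct limits), respectively taking images of $H$ (for inverse limits), then applying the chain hypotheses to the resulting directed system of finitely generated subgroups of the $G_i$ to force eventual stabilization. Your packaging in terms of a directed family of kernels acquiring a largest (resp.\ smallest, hence trivial) member is only a cosmetic reformulation of the paper's statement that the induced maps among these subgroups are eventually isomorphisms.
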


\begin{proof}
We shall show, under the respective hypotheses of~(a) and~(b), that
if $G$ is a direct limit,
respectively an inverse limit, of groups in $\mathcal{G},$ then
every finitely generated subgroup of $G$ belongs to $\mathcal{G}.$
The fact that every group is the directed union of
its finitely generated subgroups yields
the converse to this result in the
situation of~(a), and hence the parenthetical observation in~(b).

Suppose first that $G$ is the direct limit of a system of
groups $G_i\in\mathcal{G}$ $(i\in I)$ and maps $f_{ij}:G_i\to G_j$
for $i\leq j,$ under a directed partial ordering $\leq$ on $I,$
and let $H$ be a subgroup of $G$ generated by finitely
many elements $g_1,\dots,g_n.$
By the directedness of $I,$ there will be some
$i_0\in I$ such that $G_{i_0}$ contains elements
$g_{i_0,1},\dots,g_{i_0,n}$ mapping to $g_1,\dots,g_n\in G.$
Now consider for each $i\geq i_0$ the subgroup
$H_i\subseteq G_i$ generated
by the images of $g_{i_0,1},\dots,g_{i_0,n}.$
These subgroups will form a directed system of members
of $\mathcal{G}$ (in view of our assumption that
$\mathcal{G}$ is closed under passing to finitely
generated subgroups); so assuming the hypothesis of~(a),
after some point $i_1$ in that directed system, the induced maps
among the $H_i$ will all be one-to-one, hence be isomorphisms.
Hence $H_{i_1}\subseteq G_{i_1}$ is isomorphic to $H\subseteq G;$
moreover, being a finitely generated subgroup of
$G_{i_1}\in\mathcal{G},$ the group $H_{i_1}$ lies in $\mathcal{G},$
so $H\in\mathcal{G},$ as desired.

On the other hand, suppose $G$ is the inverse limit of
a system of groups $G_i\in\mathcal{G}$ $(i\in I)$ and maps $G_i\to G_j$
$(i\leq j)$ under an inversely directed partial ordering on $I,$
and again let $H$ be a finitely generated subgroup of $G.$
This time, we look at the images $H_i\subseteq G_i$ $(i\in I)$
of $H,$ and the induced homomorphisms among these.
If $H$ is generated by $n$ elements, the same will
be true of all the $H_i,$ so by the hypothesis of~(b),
we can find an $i_1$ before which all these
homomorphisms are isomorphisms, and conclude that
$H\cong H_{i_1};$ hence again, $H\in\mathcal{G}.$
\end{proof}

We now turn to the particular $\mathcal{G}$ of~\eqref{d.calG}.

That this $\mathcal{G}$ is closed under passing to
finitely generated subgroups
follows from the Kurosh Subgroup Theorem~\cite[Theorem~7.8]{WD+MJD}.
(That theorem says that any subgroup $H$ of a free product
$G$ of groups is a
free product of copies of subgroups of some of those groups, together
with a free group.
In our situation,
the latter free group, a free product of infinite cyclic groups,
can be regarded simply as bringing additional free abelian groups
of rank~$1$ into the description of $H$ as a free product; so
$H$ is, as required, a member of $\mathcal{G}.)$

Note that if $G\in\mathcal{G}$ is the free product of free abelian
groups $A_1,\dots,A_N$ of ranks $d_1,\dots,d_N \geq 1,$ then it
cannot be generated by fewer than $d_1+\dots+d_N$ elements, since
its abelianization is the free abelian group of that rank.
Moreover, $d_1,\dots,d_N$ determine the structure of $G,$ so there
exist, up to isomorphism, only finitely many groups in $\mathcal{G}$
generated by a given finite number of elements.

Now in the chains of surjective homomorphisms
$G_1\to G_2\to\dots$ and $\dots\to G_{-2}\to G_{-1}$
considered in Proposition~\ref{P.class_G}, the numbers of generators
of the groups in a given chain is always bounded: in the chains
of the former sort, by the number of generators of $G_1;$
in those of the latter sort, by the hypothesis of statement~(b).
Hence if a chain of either sort
in our present $\mathcal{G}$ were infinite, it would
have to contain two isomorphic groups, which would lead to a
non-one-to-one surjective endomorphism of some member of $\mathcal{G}.$
But the groups in $\mathcal{G}$ are known to be
{\em Hopfian}, i.e., not isomorphic to proper homomorphic
images of themselves~\cite{IMSD+HN}.
(That paper shows that a free product of finitely many
finitely generated Hopfian groups is Hopfian.)
Hence the existence of such infinite chains
would lead to a contradiction; so $\mathcal{G}$ satisfies the
hypotheses of both~(a) and~(b).

For this $\mathcal{G},$
the inclusion of the class of inverse limits in the class of
direct limits (noted parenthetically at the end of~(b))
is proper:  The additive group of rational numbers,
though a direct limit of finitely generated free abelian groups,
has no nontrivial homomorphisms into groups in $\mathcal{G},$
hence it is not an inverse limit of such groups.

In summary,

\begin{theorem}\label{t.lims}
For $\mathcal{G}$ as in~\eqref{d.calG},
the direct limits of groups in $\mathcal{G}$
are those groups all of whose finitely
generated subgroups belong to $\mathcal{G},$ while the
inverse limits of groups in $\mathcal{G}$
form a proper subclass thereof.\qed
\end{theorem}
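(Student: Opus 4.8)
The plan is to establish Theorem~\ref{t.lims} by verifying that the class $\mathcal{G}$ of~\eqref{d.calG} satisfies the hypotheses of both parts of Proposition~\ref{P.class_G}, and then separately exhibiting a direct limit of groups in $\mathcal{G}$ that is not an inverse limit of such groups. First I would confirm that $\mathcal{G}$ is closed under isomorphisms and under passing to finitely generated subgroups, so that Proposition~\ref{P.class_G} applies at all. Closure under isomorphism is immediate from the description. For closure under finitely generated subgroups I would invoke the Kurosh Subgroup Theorem~\cite[Theorem~7.8]{WD+MJD}: a subgroup $H$ of a free product is itself a free product of (conjugates of) subgroups of the factors together with a free group. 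Here the factors are free abelian, so subgroups of the factors are again free abelian of finite rank, and the extra free group is a free product of infinite cyclic groups, each of which is free abelian of rank~$1$. Thus $H\in\mathcal{G}$, as required.

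Next I would establish the finiteness-of-chains hypotheses in~(a) and~(b). The key structural observation is that for $G\in\mathcal{G}$ a free product of free abelian groups $A_1,\dots,A_N$ of ranks $d_1,\dots,d_N\geq 1$, the abelianization of $G$ is free abelian of rank $d_1+\dots+d_N$, so $G$ cannot be generated by fewer than $d_1+\dots+d_N$ elements; consequently there are, up to isomorphism, only finitely many groups in $\mathcal{G}$ requiring a given bounded number of generators. In any chain $G_1\to G_2\to\dots$ of surjections, the number of generators cannot increase, and in the reverse chains of~(b) the number of generators is bounded by hypothesis. So an infinite chain of either sort would contain two isomorphic groups, forcing a non-one-to-one surjective endomorphism of some $G\in\mathcal{G}$. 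This is impossible because the groups in $\mathcal{G}$ are Hopfian: free products of finitely many finitely generated Hopfian groups are Hopfian~\cite{IMSD+HN}, and finitely generated free abelian groups are Hopfian. Hence $\mathcal{G}$ satisfies both hypotheses, and Proposition~\ref{P.class_G} identifies the direct limits with exactly those groups all of whose finitely generated subgroups lie in $\mathcal{G}$, and shows every inverse limit has this same property.

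The remaining point is the properness of the inclusion of inverse limits among direct limits. For this I would exhibit the additive group $\mathbb{Q}$ of rational numbers. It is a direct limit of finitely generated free abelian groups of rank~$1$ (namely the subgroups $\tfrac{1}{n!}\mathbb{Z}$), so it lies in the class of direct limits. But $\mathbb{Q}$ is divisible, so any homomorphism from $\mathbb{Q}$ into a group in $\mathcal{G}$ has divisible image; since the groups in $\mathcal{G}$ have no nontrivial divisible subgroups (their abelianizations are free abelian, and their non-abelian structure likewise admits no divisible elements), every such homomorphism is trivial. An inverse limit always admits the projection homomorphisms to its constituent groups, and these would have to be jointly injective; but all of them being trivial would force $\mathbb{Q}$ to be trivial, a contradiction. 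Hence $\mathbb{Q}$ is not an inverse limit of groups in $\mathcal{G}$, establishing the properness.

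The main obstacle I anticipate is not any single deep step but rather assembling the bookkeeping cleanly: the heart of the argument is the interplay between the generator-count bound and the Hopfian property, and the delicate part is ensuring that the bound on generators genuinely propagates along the chains in both the direct and inverse settings so that only finitely many isomorphism types can appear. Once that is pinned down, the Hopfian property of free products of Hopfian groups does the real work, and the $\mathbb{Q}$ example for properness is elementary. Since all the substantive facts---Kurosh, Hopficity of free products, and the structure of direct limits from Proposition~\ref{P.class_G}---are available as cited results, the proof reduces to combining them in the right order.
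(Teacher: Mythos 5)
Your proposal is correct and follows essentially the same route as the paper: verifying the hypotheses of both parts of Proposition~\ref{P.class_G} via the Kurosh Subgroup Theorem, the generator-count bound coming from abelianizations, and the Hopfian property of free products cited from~\cite{IMSD+HN}, then using the additive group $\mathbb{Q}$ of rationals to establish properness. Your divisibility argument for why $\mathbb{Q}$ admits no nontrivial homomorphism into any group in $\mathcal{G}$ merely fills in a detail that the paper leaves implicit.
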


We noted in the proof of Proposition~\ref{P.cP_tf_ab}
that the class of direct limit groups characterized
in the first part of the above theorem
includes all free products of families of torsion-free abelian groups.
A member of that class which is not itself such a free product is
the direct limit of the chain of inclusions of free groups
on two generators
\begin{equation}\begin{minipage}[c]{35pc}\label{d.chain}
$<x_1,x_2>\ \subseteq\ \dots\ \subseteq\ <x_{n-1},x_n>\ \subseteq
\ <x_n,x_{n+1}>\ \subseteq\ \dots,$ where $x_{n-1}~=~[x_n,x_{n+1}].$
\end{minipage}\end{equation}
This group is nontrivial but has trivial abelianization,
which is clearly not the case for any free product of abelian groups.
An example of an {\em inverse} limit of groups in $\mathcal{G}$
which is not a free product of abelian groups
(and so gives another example of a direct limit with this property)
is the inverse limit $G$ of the chain of surjections of free groups
\begin{equation}\begin{minipage}[c]{35pc}\label{d.inv_chain}
$\dots\ \to\ \langl x_1,\dots,x_n\rangl\to\ \dots\ \to
\ \langl x_1,x_2\rangl\ \to\ \langl x_1\rangl,$
\end{minipage}\end{equation}
where each map
$\langl x_1,\dots,x_n\rangl\to \langl x_1,\dots,x_{n-1}\rangl$
takes $x_n$ to $1,$ and fixes the other free generators.
Indeed, one can show that every abelian
subgroup of $G$ is cyclic, so if $G$ were a free product of
abelian groups, the free factors would be infinite cyclic, i.e.,
$G$ would be free; but by~\cite[Corollary to Theorem~1]{GH}
it is non-free.

\section{Acknowledgements}\label{S.Ackn}

I am indebted to Dave Witte Morris for correspondence discussing
Question~\ref{Q.free}, to Warren Dicks, Peter Linnell,
Don Passman, Tom Scanlon and
Eddy Godelle
respectively for helping me with references
and related material used in Theorem~\ref{t.lims},
Proposition~\ref{P.no_trinom}, Section~\ref{S.univ},
Section~\ref{S.variant}, and the paragraph at the end of
Section~\ref{S.univ}; and, finally,
to the referees of several versions of this note
for very helpful comments and pointers to the literature.

\Needspace{4\baselineskip}

\end{document}